\newcommand{\personal}[1]{}
\newtheorem{theorem}{Theorem}[section]
\newtheorem*{theorem*}{Theorem}
\newtheorem{claim}[theorem]{Claim}
\newtheorem{lemma}[theorem]{Lemma}
\newtheorem{question}[theorem]{Question}
\DeclareMathOperator{\BT}{BT}
\title{Completely independent spanning trees in the hypercube}
\author{Benedict Randall Shaw}
\date{December 2024}
\begin{document}

\maketitle

\begin{abstract}
    We say two spanning trees of a graph are completely independent if their edge sets are disjoint, and for each pair of vertices, the paths between them in each spanning tree do not have any other vertex in common. Pai and Chang \cite{pai_constructing_2016} constructed two such spanning trees in the hypercube \(Q_n\) for sufficiently large \(n\), while Kandekar and Mane \cite{kandekar_constructing_2024} recently showed there are \(3\) pairwise completely independent spanning trees in hypercubes \(Q_n\) for sufficiently large \(n\). We prove that for each \(k\), there exist \(k\) completely independent spanning trees in \(Q_n\) for sufficiently large \(n\). In fact, we show that there are \((\frac1{12}+o(1))n\) spanning trees in \(Q_n\), each with diameter \((2+o(1))n\). As the minimal diameter of any spanning tree of \(Q_n\) is \(2n-1\), this diameter is asymptotically optimal. We prove a similar result for the powers \(H^n\) of any fixed graph \(H\).
\end{abstract}

\section{Introduction}

Let \(G\) be a graph. We say two spanning trees of \(G\) are \emph{edge-disjoint} if they have no edges in common. If two spanning trees are edge-disjoint, and additionally for any two vertices \(u,v\) of \(G\), the paths from \(u\) to \(v\) within each spanning tree do not meet at any other vertices, then we call the trees \emph{completely independent}.

The study of completely independent spanning trees has applications in fault-tolerant communication protocols and secure message distribution \cite{pai_three_2020}, because finding \(k\) such trees allows information to be sent reliably even when \(k-1\) nodes have failed. 

The \textit{hypercube} \(Q_n\) has vertices \(\{0,1\}^n\), and edges those pairs of vertices which differ in exactly one coordinate. Pai and Chang \cite{pai_constructing_2016} constructed two completely independent spanning trees in hypercubes of dimension at least \(4\), as well as other related graphs, and proposed the problem of constructing more such trees. Kandekar and Mane \cite{kandekar_constructing_2024} constructed \(3\) completely independent spanning trees in hypercubes of dimension at least \(7\).

We show that, for any \(k\), there are \(k\) completely independent spanning trees in \(Q_n\) for sufficiently large \(n\). Our main result is the following:
\begin{theorem}
    There exist \(\Omega(n)\) completely independent spanning trees in \(Q_n\).\label{thm:start}
\end{theorem}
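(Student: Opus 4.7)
The plan is to apply the standard characterisation of completely independent spanning trees: a family $T_1,\dots,T_k$ of spanning trees of $G$ is completely independent if and only if (i) they are pairwise edge-disjoint and (ii) every vertex of $G$ is a leaf in all but at most one of the $T_i$. So it suffices to construct $k = \Omega(n)$ edge-disjoint spanning trees of $Q_n$ whose sets of internal (non-leaf) vertices are pairwise disjoint.

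I would fix a constant $c$, set $k=\lfloor n/c\rfloor$, and partition $[n]$ into blocks $B_1,\dots,B_k$ of size $c$. For each block $B_i$ I would construct a set $V_i\subseteq V(Q_n)$ intended to be the set of internal vertices of $T_i$. The design targets for the family $(V_i)$ are: (a) pairwise disjoint; (b) each $V_i$ induces a connected subgraph of $Q_n$; and (c) each $V_i$ is a dominating set in $Q_n$, i.e.\ every $u\notin V_i$ has a neighbour in $V_i$. Given such a family, $T_i$ is built by taking a subtree of $Q_n[V_i]$ spanning $V_i$ (possible by (b)) and attaching every vertex $u\notin V_i$ as a leaf via one edge to a chosen neighbour $p_i(u)\in V_i$ (possible by (c)). Then (a) immediately forces the internal-vertex condition, and edge-disjointness of the attachments can be handled greedily: each exterior vertex of $V_i$ will have $\Theta(c)$ neighbours inside $V_i$ by construction, so the $O(k)=O(n)$ parent choices made at each vertex across the $k$ trees can be kept pairwise distinct.

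The main obstacle is arranging (a)--(c) simultaneously for $k=\Omega(n)$ sets. The requirements are in tension: the standard lower bound on a dominating set of $Q_n$ forces $|V_i|\ge 2^n/(n+1)$, so the $V_i$ must be close to the minimum possible and cannot be thickened freely. Cosets of a Hamming code give exactly $n+1$ disjoint perfect dominating sets of the right size (when $n+1$ is a power of two), but they are independent sets in $Q_n$ and so catastrophically fail (b). My proposed fix is to couple a local block-pattern with a global syndrome: take $V_i$ to consist of those $v$ whose restriction $v|_{B_i}$ lies in a fixed connected template in $\{0,1\}^{B_i}$ (so that $V_i$ carries a sub-cube-shaped backbone in the $B_i$-direction and is therefore connected), while $v|_{[n]\setminus B_i}$ satisfies a short linear constraint identifying $i$ (ensuring disjointness and that every exterior vertex is one coordinate flip in $[n]\setminus B_i$ away from $V_i$). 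The hard part is showing that a single constant $c$ can be chosen so that all three of (a), (b), (c) hold simultaneously for the full collection of $\Omega(n)$ blocks; once this is done, $k=\Omega(n)$ follows and the rest of the verification is bookkeeping on the greedy edge-disjoint attachment.
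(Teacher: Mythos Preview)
Your framework is the right one and matches the paper: use Hasunuma's characterisation (Lemma~2.1 in the paper) to reduce to building $k=\Omega(n)$ edge-disjoint spanning trees whose interiors are pairwise disjoint, equivalently $k$ disjoint connected dominating sets together with compatible leaf attachments. But the proposal is a plan, not a proof: you explicitly leave ``the hard part'' --- making (a), (b), (c) hold simultaneously for $\Omega(n)$ sets --- unresolved, and the sketched construction does not obviously work.

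Concretely, with $V_i$ defined by a template on $B_i$ and a linear syndrome on $[n]\setminus B_i$, the constraints for different $i$ live on different coordinate sets, so it is not clear why $V_i\cap V_j=\varnothing$. Connectivity is also problematic: $V_i$ is a product $A\times L_i$ with $A\subseteq\{0,1\}^{B_i}$ and $L_i$ the syndrome set in $\{0,1\}^{[n]\setminus B_i}$, so $Q_n[V_i]$ is connected iff both factors are; the template $A$ does nothing to connect $L_i$, and affine subspaces of $\mathbb{F}_2^{n-c}$ are typically disconnected in the cube (e.g.\ the even-weight vectors in $Q_2$). Domination may fail too: if $u\notin V_i$ because both the template and the syndrome are wrong, a single flip cannot repair both. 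Finally, your greedy edge-disjointness sketch overlooks the two-sided conflict: an attachment edge $\{u,p_i(u)\}$ with $u\in V_j$ and $p_i(u)\in V_i$ can coincide with $\{p_j(p_i(u)),p_i(u)\}$, so choices at $u$ and at $p_i(u)$ interact.

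The paper's construction supplies exactly the missing idea. It does \emph{not} partition $[n]$ into blocks; it fixes $m\approx 8\cdot 2^k$, views $Q_n=Q_m\times Q_{n-m}$, and uses a single linear label $\ell:Q_m\to\mathbb{F}_2^k$ (your Hamming-code intuition, but with eight parallel copies, so every vertex has exactly eight neighbours of each other label). The label classes $\ell^{-1}(\alpha)$ are independent sets in $Q_m$, and the paper does \emph{not} try to thicken them into connected sets. Instead, each column $\{\mathbf{u}\}\times Q_{n-m}$ with $\ell(\mathbf{u})=\alpha$ carries a fixed spanning tree $T'$ of $Q_{n-m}$ with many leaves, and the columns are tied together by length-$3$ paths placed in dedicated ``junction rows'' $Q_m\times\{\mathbf{w}\}$, one per edge of a spanning tree of the length-$3$ graph on $\ell^{-1}(\alpha)$, with each junction sitting at a distinct leaf $\mathbf{w}$ of $T'$. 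The eight-fold redundancy, together with a fixed linear order on the labels (direction $(1,\cdot)$ versus $(2,\cdot)$ in standard rows, and a more elaborate split in junction rows), is what makes the leaf attachments edge-disjoint across all $2^k$ trees without any greedy argument. That junction-row mechanism is the crux you are missing.
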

Note that of course there is a linear upper bound on the number of completely independent spanning trees in \(Q_n\)---for example, by considering the number of edges in each spanning tree of \(Q_n\), we see that \(Q_n\) can have at most \(\left(\frac12+o(1)\right)n\) edge-disjoint spanning trees.

We now turn to diameter, which in applications to communications corresponds to the maximal delay in a network \cite{pai_constructing_2016}. Any spanning tree of \(Q_n\) must have diameter at least \(2n-1\). With a more involved construction than for Theorem \ref{thm:start}, we will show the following stronger result:

\begin{theorem}
    There exist \(\left(\frac1{12}+o(1)\right)n\) completely independent spanning trees in \(Q_n\), each with diameter \((2+o(1))n\).
\end{theorem}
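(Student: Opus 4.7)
The plan is to construct $k = \lfloor n/12 \rfloor$ spanning trees by designating, for each $i \in [k]$, a carefully chosen set $V_i \subseteq V(Q_n)$ that will serve as the internal (non-leaf) vertex set of $T_i$. If the $V_i$ are pairwise disjoint and the trees are pairwise edge-disjoint, the $T_i$ are automatically completely independent: for any $u,v$ and any $j \neq i$, the $u$-to-$v$ path in $T_j$ cannot pass through a vertex of $V_i$, since such a vertex would then be internal in $T_j$. So the task reduces to finding a suitable partition $V_0 \sqcup V_1 \sqcup \cdots \sqcup V_k = V(Q_n)$ (with $V_0$ playing the role of ``leaves in every tree'') and building the $T_i$ around it.

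Concretely, I would partition $[n]$ into $k$ blocks $B_1, \ldots, B_k$ of size $12$ (absorbing any leftover coordinates into one block, contributing only $o(n)$ to all quantities), and define each internal class $V_i$ using block $B_i$ so as to satisfy: (a) $V_i$ induces a connected subgraph of $Q_n$, so a spanning subtree on $V_i$ exists; and (b) every vertex of $V(Q_n) \setminus V_i$ has a $Q_n$-neighbor in $V_i$, so that leaves can be attached. Given such a partition, each $T_i$ is assembled by taking a BFS-like spanning tree of $Q_n[V_i]$---the \emph{spine}---and attaching every $v \notin V_i$ to a chosen $Q_n$-neighbor in $V_i$. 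Since $Q_n[V_i]$ has radius at most $n$, the spine can be chosen with diameter at most $(2+o(1))n - 2$, and attaching leaves adds at most $2$, yielding total diameter $(2+o(1))n$. Edge-disjointness of spines is immediate (they lie in disjoint induced subgraphs on the $V_i$); edge-disjointness of the leaf attachments follows because each vertex has $n$ neighbors but only needs to supply roughly $n/12$ leaf edges across the trees, giving ample freedom to pick them pairwise distinct and disjoint from all spines.

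The main obstacle, and the source of the constant $1/12$, is constructing a partition that satisfies (a) and (b) with $k$ as large as $n/12$. A naive block-based choice such as $V_i = \{v : v_{B_i} = 0^{12}\}$ already fails: these sets overlap (so a priority rule is needed, destroying connectedness), and for $v$ with $v_{B_i}$ of Hamming weight at least $2$ no single coordinate flip brings $v$ into $V_i$, violating (b). I expect the working construction to impose a more intricate condition on $v_{B_i}$---for example, a dominating-set-type codeword condition on $v_{B_i}$ combined with constraints on the other blocks---where the block size of $12$ is the balance point between having many classes (favored by small blocks) and having enough local flexibility per block to secure (a) and (b) simultaneously (favored by large blocks). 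Verifying (b) via an explicit case analysis over the ``block profile'' of $v$---i.e.\ which blocks satisfy which conditions---and then coordinating leaf-attachments with spines to preserve edge-disjointness is where I expect the bulk of the technical work to concentrate.
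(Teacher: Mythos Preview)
Your proposal correctly identifies the reduction to building edge-disjoint trees with pairwise disjoint interiors (this is the paper's Lemma~2.1), but it is a plan rather than a proof: the central object---the partition $V_1,\dots,V_k$---is never actually constructed. You yourself flag this as ``the main obstacle'' and defer it to an unspecified ``more intricate condition'' and a case analysis you ``expect'' to work. Without that construction there is no proof, and the block-based template you sketch has a structural problem you note but do not fix: any condition on $v_{B_i}$ alone produces sets $V_i,V_j$ that overlap (since they constrain disjoint coordinate sets), while imposing a priority rule to restore disjointness destroys both domination and connectedness. Nothing in the proposal explains how to escape this tension, nor why $12$ is the right block size.

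There is also a gap in the diameter argument. You assert that $Q_n[V_i]$ has radius at most $n$, but this is the radius of $Q_n$, not of the induced subgraph; for a general subset $V_i$ the induced radius can be exponentially larger. Getting the spine diameter down to $(2+o(1))n$ is exactly where the paper invests most of its effort (Section~5), and it does not come for free.

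For comparison, the paper takes a rather different route. It writes $Q_n = Q_m \times Q_{n-m}$ with $m = 6(2^k-1)$, identifies the $m$ coordinates with $[6]\times(\mathbb{F}_2^k\setminus\{0\})$, and labels $\mathbf{u}\in Q_m$ by the $\mathbb{F}_2^k$-linear form $\ell(\mathbf{u})=\sum u_{(i,\alpha)}\alpha$. The interior of $T_\alpha$ is essentially $\ell^{-1}(\alpha)\times Q_{n-m}$; these classes are automatically disjoint, and every vertex has several neighbours in every other class by design. The catch is that $\ell^{-1}(\alpha)$ is an \emph{independent set} in $Q_m$, so $Q_n[V_\alpha]$ is not connected; connectivity is achieved by putting a fixed tree $T'$ in each column of label $\alpha$ and then linking columns via carefully engineered ``junction rows''. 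The $(2+o(1))n$ diameter comes from arranging these junctions so that only a bounded number are needed and each contributes $O(2^k)$ to path lengths. The constant $1/12$ arises because $2^k$ trees require $n \ge 6\cdot 2^k + O(k)$, and doubling $2^k$ is only possible once $n$ roughly doubles.
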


Finally, we also consider a more general class of graphs: the Cartesian powers \(H^{n}\), where two vertices \(\mathbf{u}=(u_1,\dots,u_n),\mathbf{u}'\) are adjacent if there is some \(i\in [n]\) such that \(u_j=u'_j\) for each \(j\neq i\), and \(u_iu'_i\in H\).

Again we show that, for a fixed graph \(H\), for any \(k\), there are \(k\) completely independent spanning trees in \(H^n\) for sufficiently large \(n\). We note that any spanning tree must have diameter at least \(2n\cdot r(H)-1\), where \(r(H)\) is the radius of \(H\), and generalise the previous result to the following:
\begin{theorem}
Given any connected graph \(H\), there exist \(\left(\frac1{12}+o(1)\right)n\) completely independent spanning trees in \(H^{n}\), each with diameter \((2+o(1))n\cdot r(H)\).
\end{theorem}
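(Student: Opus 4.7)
The plan is to reduce Theorem~1.3 to Theorem~1.2 by a lifting construction. Fix a center vertex $c \in V(H)$ with eccentricity $r=r(H)$, and let $S$ be a BFS spanning tree of $H$ rooted at $c$, so that for each $v \in V(H)$ the unique $c$-to-$v$ path $P_v$ in $S$ has length at most $r$. Fix also a neighbor $c'$ of $c$ in $H$. Identify $V(Q_n)$ with the binary skeleton $\Sigma := \{c,c'\}^n \subseteq V(H^n)$, and apply Theorem~1.2 to obtain $k = (\tfrac{1}{12}+o(1))n$ completely independent spanning trees $T_1,\dots,T_k$ on $\Sigma$, each of diameter $(2+o(1))n$. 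Since every ``flip'' edge of $\Sigma$ is realized by the single edge $cc'$ in the appropriate coordinate of $H^n$, these $T_i$ embed as pairwise edge-disjoint subgraphs of $H^n$.

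I would then extend each $T_i$ to a spanning tree $\tilde T_i$ of all of $H^n$ by a BFS-style attachment rule: for each non-skeleton vertex $\mathbf{u}\in V(H^n)\setminus\Sigma$, iteratively replace each off-skeleton coordinate $u_j\notin\{c,c'\}$ by its parent in $S$, consuming at most $r-1$ edges per coordinate. This yields a canonical path from $\mathbf{u}$ into $\Sigma$ of total length at most $n(r-1)$. To ensure pairwise edge-disjointness across all $k$ trees, the attachment edges must be allocated between trees by varying the order in which coordinates are reduced, or by exploiting alternative BFS branches of $S$ where available, mirroring the edge-distribution pattern already imposed by $T_1,\dots,T_k$ on $\Sigma$.

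The main obstacle is maintaining the complete-independence property after the lift. For any pair $\mathbf{u},\mathbf{v}\in V(H^n)$, the $\tilde T_i$-path between them consists of an attachment path from $\mathbf{u}$ into $\Sigma$, a skeleton $T_i$-path, and an attachment path back out to $\mathbf{v}$; the corresponding $\tilde T_j$-path has the same shape. Guaranteeing that these two paths share only their endpoints is significantly more subtle than edge-disjointness, because attachment paths in different trees may collide at intermediate vertices even when they use distinct edges. Addressing this likely requires assigning each non-skeleton vertex a distinct ``home coordinate'' per tree along which its attachment descends, with the coordinate assignments varying cleanly across the $k$ trees so that the interior vertices of different attachment paths are forced to differ. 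Once this is arranged, the diameter bound follows routinely: a $T_i$-path in $\Sigma$ of length $\ell$ lifts to a $\tilde T_i$-path of length $\ell$ in $H^n$, plus at most $n(r-1)$ attachment edges at each end, yielding total diameter $(2+o(1))n\cdot r$ as required.
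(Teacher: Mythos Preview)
Your reduction idea is natural, but the proposal as written has a genuine gap at exactly the point you flag. By Lemma~2.1, complete independence is equivalent to edge-disjointness together with disjoint interiors. In your lift, every intermediate vertex on an attachment path acquires degree at least two in that $\tilde T_i$ and hence lies in its interior. For the interiors of $k\approx n/12$ trees to be pairwise disjoint, each non-skeleton vertex $\mathbf{v}$ must be a leaf in all but one $\tilde T_j$; in those $k-1$ trees its unique incident edge must land on a neighbour that \emph{is} permitted to be interior in $\tilde T_j$. You have not specified any such assignment. ``Varying the order in which coordinates are reduced'' does not obviously work: for two orderings $\sigma_i,\sigma_j$, a vertex $\mathbf{v}$ with several off-skeleton coordinates is typically a branch point (hence interior) in both resulting attachment forests. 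Producing a partition of the non-skeleton vertices into $k$ classes, with each class serving as the interior for one tree and every other vertex attaching into that class by a single edge, is precisely the content of the labelling construction---you would essentially have to reinvent it.

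The paper does not reduce to $Q_n$. Instead it reruns the Section~5 construction directly in $H^n$: fix a spanning tree $T^*$ of $H$ with centre $z$, take the bipartition $c:V(T^*)\to\mathbb{F}_2$, and define $\ell(\mathbf{u})=\sum_{(i,\alpha)} c(u_{(i,\alpha)})\alpha$. Columns carry a generalised broadcast tree $T'$ on $H^{n-m}$; rows are built via an analogue of Claim~5.1. One new wrinkle is that $c$ is not injective, so the paper uses a restricted broadcast tree $\BT^*$ (forcing certain coordinates into $\{z,x\}$ for a fixed neighbour $x$ of $z$) to keep the junction graphs acyclic. This gives disjoint interiors directly from the labelling, with no lifting step.

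A secondary issue: your diameter count is off. Reducing a coordinate $u_j\notin\{c,c'\}$ to the skeleton along $S$ costs $d_S(u_j,\{c,c'\})$, which is $r$ in the worst case (take $H=P_{2r+1}$), not $r-1$. Your bound therefore reads $(2+o(1))n+2nr=(2r+2+o(1))n$, which is not $(2+o(1))n\cdot r(H)$ for fixed $r$.
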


There has been a considerable amount of work on completely independent spanning trees. For example, Araki \cite{araki_diracs_2014} showed that the condition of Dirac's theorem on Hamilton cycles, that a graph's minimum degree be at least half its order, is sufficient for the existence of two completely independent spanning trees. Hasunuma \cite{hasunuma_completely_2002} showed that any \(4\)-connected maximal planar graph has at least two, but that in general deciding whether a graph has two completely independent spanning trees is NP-complete.

Hasunuma also conjectured that any \(2k\)-connected graph must include \(k\) completely independent spanning trees. Péterfalvi \cite{peterfalvi_two_2012} disproved this, constructing graphs of arbitrarily large connectivity without even two completely independent spanning trees. Another counterexample was given by Pai, Yang, Yao, Tang, and Chang \cite{pai_completely_2014}, who showed that \(Q_{10}\) does not have \(5\) completely independent spanning trees. Other notions of independence have been studied on the hypercube---Tang, Wang, and Leu \cite{tang_optimal_2004} found \(n\) trees in \(Q_n\) rooted at the same vertex such that paths between a vertex and the root in different trees do not meet except at their endpoints. Cartesian powers have also been studied: Hong \cite{hong_completely_2018} proved that for any connected graph \(H\) with minimum degree at least \(d\) and order at least \(2d\), \(H^d\) has \(d\) completely independent spanning trees.

For other related results, see Cheng, Wang, and Fan's 2023 survey \cite{cheng_independent_2023} of research on independent spanning trees.

\section{Overview}

In this section we give an overview of some of the ideas used in the proofs, omitting technical details.

We say the \emph{interior} of a tree is the set of its vertices which are not leaves. Notice that the interior of a path are those vertices in the path other than the endpoints. An equivalent formulation of complete independence was noted by Hasunuma \cite{hasunuma_completely_2001}; we give a proof for the sake of completeness.

\begin{lemma}
    A set of spanning trees of a graph are completely independent if and only if they are edge-disjoint, and their interiors are disjoint.\label{lem:characterisation}
\end{lemma}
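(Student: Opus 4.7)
The plan is to prove both directions separately, with the backward direction being essentially immediate and the forward direction requiring a short combinatorial argument.

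For the backward direction, I would assume the trees are edge-disjoint with pairwise disjoint interiors, and argue contrapositively: if trees $T_i$ and $T_j$ fail to be completely independent, then some pair of vertices $u,v$ has $T_i$-path and $T_j$-path sharing a vertex $w \notin \{u,v\}$. Then $w$ is an internal vertex of each of these paths, so it has at least two neighbors in $T_i$ (its predecessor and successor on the $T_i$-path) and likewise at least two neighbors in $T_j$. Hence $w$ lies in the interior of both $T_i$ and $T_j$, contradicting interior-disjointness.

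For the forward direction, I would show the contrapositive: if some vertex $v$ lies in the interior of both $T_i$ and $T_j$, then we can find a pair of vertices whose paths in $T_i$ and $T_j$ both pass through $v$ as an internal vertex. Removing $v$ from $T_i$ yields components $C_1,\dots,C_s$ with $s=\deg_{T_i}(v)\geq 2$; similarly removing $v$ from $T_j$ yields components $D_1,\dots,D_t$ with $t\geq 2$. Any vertices $x \in C_a$, $y \in C_b$ with $a \neq b$ have their $T_i$-path passing through $v$, and analogously for the $D$-partition. So it suffices to find $x,y \in V(G)\setminus\{v\}$ that lie in different parts of \emph{both} partitions.

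The key observation is that this is always possible. Consider the bipartite ``intersection graph'' on the parts, with $C_a \sim D_b$ iff $C_a \cap D_b \neq \emptyset$. If this graph contains a matching of size two, say $C_a \cap D_b$ and $C_{a'} \cap D_{b'}$ are nonempty with $a\neq a'$ and $b \neq b'$, picking one vertex from each intersection finishes the argument. Otherwise all nonempty intersections share a common part, say $D_1$, which forces $\bigcup_a C_a \subseteq D_1$ and hence $D_2 = \emptyset$, contradicting $t\geq 2$. This contradicts complete independence of $T_i, T_j$ at the pair $x,y$. The main (mild) obstacle is exactly this bookkeeping step, ensuring a single pair witnessing the failure simultaneously in both trees; once that is in hand, both directions follow quickly.
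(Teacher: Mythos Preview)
Your proof is correct and follows the same structure as the paper's: the backward direction observes that any internal vertex of a path is a non-leaf of the tree, and the forward direction removes a shared interior vertex $v$ and exhibits a pair separated by $v$ in both trees. The paper simply asserts that such a pair exists without justification, whereas you supply a (slightly heavier than necessary) K\"onig-type matching argument; a direct three-vertex case analysis would also suffice.
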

\begin{proof}
    By definition, spanning trees are completely independent if they are edge-disjoint, and for any two vertices, the paths between them in each tree have disjoint interior.
    But now notice that any vertex in the interior of such a path has degree at least two, so is in the interior of the tree. Hence the interior of a path within a tree is included in the interior of the tree itself.
    So if the trees are edge-disjoint and have disjoint interiors, then they must be completely independent.

    By definition, if a set of spanning trees are not edge-disjoint, then they are not completely independent. Now if a set of spanning trees do not have disjoint interiors, then some vertex \(v\) lies in the interior of two trees \(T_1\) and \(T_2\). Now as \(v\) is not a leaf in either tree, removing \(v\) disconnects either tree. Thus each \(T_i-v\) has more than one component. Now some \(u,w\) must be in different components from each other in each \(T_i-v\). But then the paths in \(T_i\) between these both contain \(v\), and so our spanning trees are not completely independent.
\end{proof}

We will regard \(Q_n\) as
\[Q_m\times Q_{n-m}=\left\{(u;w):u\in Q_m, w\in Q_{n-m}\right\},\]
for some relevant value of \(m\). We will often consider `rows' of the form \(Q_m\times\{w\}\) and `columns' of the form \(\{u\}\times Q_{n-m}\), and with slight abuse of notation talk about \(Q_m\) to mean \(Q_m\times\{w\}\), and so on.

Let \(F\) be a vector space over \(\mathbb{F}_2\) of dimension \(k\). In Section 3, we will construct \(2^k\) completely independent spanning trees \(\left\{T_\alpha:\alpha\in F\right\}\) as follows: we first label the vertices of \(Q_m\) by \(F\) in such a way that each vertex is adjacent to eight vertices with each other possible label, but not to any with the same label. Call this labelling \(\ell:Q_m\to F\), and extend it to \(Q_n\) by \(\ell(u;w)=\ell(u)\), so that it labels entire columns.

We now describe \(T_\alpha\). Most rows \(Q_m\times\{w\}\) will be `standard' rows, which will be identical, and will include in \(T_\alpha\), for each vertex with a label other than \(\alpha\), an edge between that vertex and one with label \(\alpha\). Every column \(\{u\}\times Q_{n-m}\) of \(T_\alpha\) will either contain no edges, or contain a copy of some fixed spanning tree \(T'\) of \(Q_{n-m}\), which will have a large number of leaves. Certain rows \(Q_m\times\{w\}\), where \(w\) is a leaf of \(T'\) will instead be `junction' rows, which will include a path in some \(T_\alpha\) between two vertices of label \(\alpha\), and otherwise behave like standard rows. These junction rows will serve to connect the copies of \(T'\) within \(T_\alpha\).

The interior of each \(T_\alpha\) will then be those vertices labelled by \(\alpha\), together with the paths between copies of \(T'\) at junctions, minus those vertices used in such paths for other trees \(T_\beta\). These will thus be disjoint. By giving ourselves enough dimensions to have each vertex adjacent to not one, but \(8\) other vertices of each possible label, we will have enough leeway to build our junction rows so that the spanning trees remain edge-disjoint. Thus the \(T_\alpha\) will be completely independent.

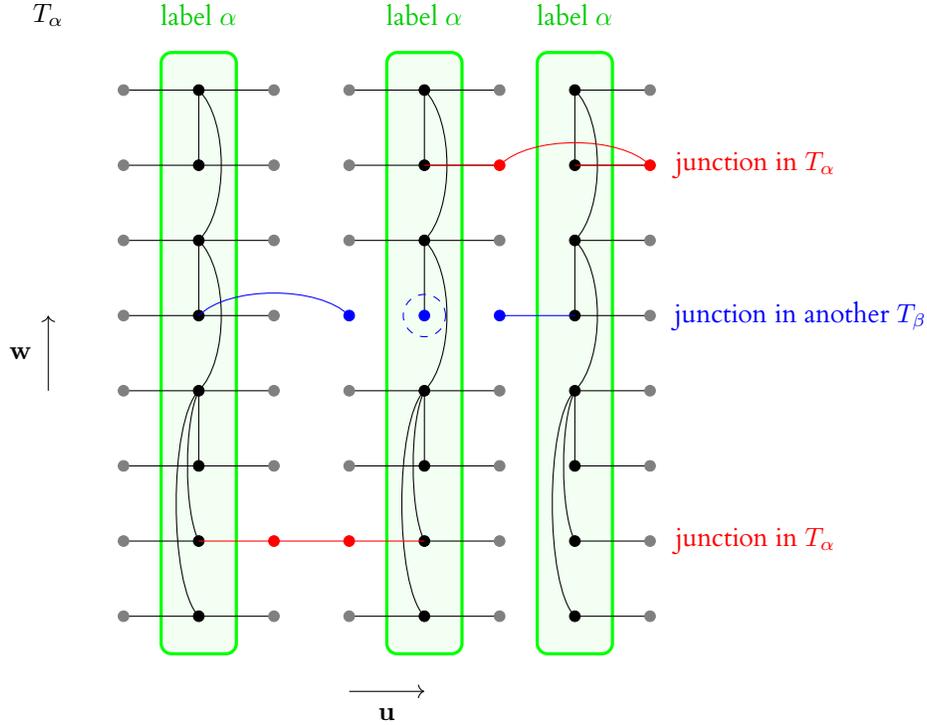
\begin{figure}[h]
    \centering
    \begin{tikzpicture}
    \filldraw[color=green, fill=green!5, very thick, rounded corners] (1.5,-0.5) rectangle (2.5,7.5) {};
    \draw (2,5) .. controls (2.4,5.4) and (2.4,6.6) .. (2,7);
    \draw (2,7) -- (2,6);
    \draw (2,3) .. controls (1.8,2.6) and (1.8,1.4) .. (2,1);
    \draw (2,3) .. controls (1.6,2.6) and (1.6,0.4) .. (2,0);
    \draw (2,3) .. controls (2.4,3.4) and (2.4,4.6) .. (2,5);
    \draw (2,3) -- (2,2);
    \draw (2,5) -- (2,4);
    \filldraw (2,0) circle (2pt);
    \filldraw (2,1) circle (2pt);
    \filldraw (2,2) circle (2pt);
    \filldraw (2,3) circle (2pt);
    \filldraw (2,4) circle (2pt);
    \filldraw (2,5) circle (2pt);
    \filldraw (2,6) circle (2pt);
    \filldraw (2,7) circle (2pt);
    
    \filldraw[color=green, fill=green!5, very thick, rounded corners] (4.5,-0.5) rectangle (5.5,7.5) {};
    \draw (5,5) .. controls (5.4,5.4) and (5.4,6.6) .. (5,7);
    \draw (5,7) -- (5,6);
    \draw (5,3) .. controls (4.8,2.6) and (4.8,1.4) .. (5,1);
    \draw (5,3) .. controls (4.6,2.6) and (4.6,0.4) .. (5,0);
    \draw (5,3) .. controls (5.4,3.4) and (5.4,4.6) .. (5,5);
    \draw (5,3) -- (5,2);
    \draw (5,5) -- (5,4);
    \filldraw (5,0) circle (2pt);
    \filldraw (5,1) circle (2pt);
    \filldraw (5,2) circle (2pt);
    \filldraw (5,3) circle (2pt);
    \filldraw[blue] (5,4) circle (2pt);
    \filldraw (5,5) circle (2pt);
    \filldraw (5,6) circle (2pt);
    \filldraw (5,7) circle (2pt);
    
    \filldraw[color=green, fill=green!5, very thick, rounded corners] (6.5,-0.5) rectangle (7.5,7.5) {};
    \draw (7,5) .. controls (7.4,5.4) and (7.4,6.6) .. (7,7);
    \draw (7,7) -- (7,6);
    \draw (7,3) .. controls (6.8,2.6) and (6.8,1.4) .. (7,1);
    \draw (7,3) .. controls (6.6,2.6) and (6.6,0.4) .. (7,0);
    \draw (7,3) .. controls (7.4,3.4) and (7.4,4.6) .. (7,5);
    \draw (7,3) -- (7,2);
    \draw (7,5) -- (7,4);
    \filldraw (7,0) circle (2pt);
    \filldraw (7,1) circle (2pt);
    \filldraw (7,2) circle (2pt);
    \filldraw (7,3) circle (2pt);
    \filldraw (7,4) circle (2pt);
    \filldraw (7,5) circle (2pt);
    \filldraw (7,6) circle (2pt);
    \filldraw (7,7) circle (2pt);

    \draw (1,0) -- (3,0);
    \draw (1,1) -- (2,1);
    \draw (1,2) -- (3,2);
    \draw (1,3) -- (3,3);
    \draw (1,4) -- (3,4);
    \draw (1,5) -- (3,5);
    \draw (1,6) -- (3,6);
    \draw (1,7) -- (3,7);
    
    \filldraw[gray] (1,0) circle (2pt);
    \filldraw[gray] (1,1) circle (2pt);
    \filldraw[gray] (1,2) circle (2pt);
    \filldraw[gray] (1,3) circle (2pt);
    \filldraw[gray] (1,4) circle (2pt);
    \filldraw[gray] (1,5) circle (2pt);
    \filldraw[gray] (1,6) circle (2pt);
    \filldraw[gray] (1,7) circle (2pt);
    
    \filldraw[gray] (3,0) circle (2pt);
    \filldraw[red] (3,1) circle (2pt);
    \filldraw[gray] (3,2) circle (2pt);
    \filldraw[gray] (3,3) circle (2pt);
    \filldraw[gray] (3,4) circle (2pt);
    \filldraw[gray] (3,5) circle (2pt);
    \filldraw[gray] (3,6) circle (2pt);
    \filldraw[gray] (3,7) circle (2pt);

    \draw (4,0) -- (6,0);
    \draw (5,1) -- (6,1);
    \draw (4,2) -- (6,2);
    \draw (4,3) -- (6,3);
    \draw (4,5) -- (6,5);
    \draw (4,6) -- (6,6);
    \draw (4,7) -- (6,7);
    
    \filldraw[gray] (4,0) circle (2pt);
    \filldraw[red] (4,1) circle (2pt);
    \filldraw[gray] (4,2) circle (2pt);
    \filldraw[gray] (4,3) circle (2pt);
    \filldraw[blue] (4,4) circle (2pt);
    \filldraw[gray] (4,5) circle (2pt);
    \filldraw[gray] (4,6) circle (2pt);
    \filldraw[gray] (4,7) circle (2pt);
    
    \filldraw[gray] (6,0) circle (2pt);
    \filldraw[gray] (6,1) circle (2pt);
    \filldraw[gray] (6,2) circle (2pt);
    \filldraw[gray] (6,3) circle (2pt);
    \filldraw[blue] (6,4) circle (2pt);
    \filldraw[gray] (6,5) circle (2pt);
    \filldraw[red] (6,6) circle (2pt);
    \filldraw[gray] (6,7) circle (2pt);

    \draw (7,0) -- (8,0);
    \draw (7,1) -- (8,1);
    \draw (7,2) -- (8,2);
    \draw (7,3) -- (8,3);
    \draw (7,4) -- (8,4);
    \draw (7,5) -- (8,5);
    \draw (7,6) -- (8,6);
    \draw (7,7) -- (8,7);
    
    \filldraw[gray] (8,0) circle (2pt);
    \filldraw[gray] (8,1) circle (2pt);
    \filldraw[gray] (8,2) circle (2pt);
    \filldraw[gray] (8,3) circle (2pt);
    \filldraw[gray] (8,4) circle (2pt);
    \filldraw[gray] (8,5) circle (2pt);
    \filldraw[red] (8,6) circle (2pt);
    \filldraw[gray] (8,7) circle (2pt);

    \draw[red] (2,1) -- (5,1);
    \draw[red] (5,6) -- (6,6);
    \draw[red] (7,6) -- (8,6);
    \draw[red] (6,6) .. controls (6.4,6.4) and (7.6,6.4) .. (8,6);

    \draw[blue] (6,4) -- (7,4);
    \draw[blue] (4,4) .. controls (3.6,4.4) and (2.4,4.4) .. (2,4);
    \draw[blue, dashed] (5,4) circle (8pt);

    \draw[->] (4,-1) -- node[below=1mm] {\(\mathbf{u}\)} (5,-1);
    \draw[->] (0,3) -- node[left=1mm] {\(\mathbf{w}\)} (0,4);

    \draw (0,8) node {\(T_\alpha\)};

    \draw (2,8) node [green!80!black] {label \(\alpha\)};
    \draw (5,8) node [green!80!black] {label \(\alpha\)};
    \draw (7,8) node [green!80!black] {label \(\alpha\)};
    \draw (9.4,6) node [red] {junction in \(T_\alpha\)};
    \draw (10,4) node [blue] {junction in another \(T_\beta\)};
    \draw (9.4,1) node [red] {junction in \(T_\alpha\)};
    \end{tikzpicture}
    \caption{The structure of tree \(T_\alpha\). Here vertices of \(Q_n\) are represented as \((u;w)\in Q_m\times Q_{n-m}\). Vertices of label \(\alpha\) are shown in green, with junctions of \(T_\alpha\) indicated in red, and those of another \(T_\beta\) in blue.}
\end{figure}

In Section 4, we will evaluate the diameter of the spanning trees constructed in the previous section, and find that it is quadratic in the number of trees, and so in \(n\). In Section 5, we will adapt our construction to fit more useful edges into each junction row, at the cost of a slightly more technical proof. This will allow us to use a finite number of junction rows to connect each \(T_\alpha\), and in particular allow us to reduce the diameter of each \(T_\alpha\) to \((2+o(1))n\), which is the best possible constant.

In Section 6, we extend our construction from spanning trees in hypercubes to arbitrary Cartesian powers \(H^{n}\). We note that spanning trees of \(H^{n}\) have diameter at least \(2n\cdot r(H) - 1\). We show that we can construct the same number of completely independent spanning trees as we could in \(Q_n\), while ensuring they all have diameter \((2+o(1))n\cdot r(H)\).

\section{Linearly many spanning trees in $Q_n$}
In this section, we prove the following result:

\begin{theorem}
    For \(k\) a positive integer, and \(n\geq 16\cdot 2^{k} -1\), there exist \(2^k\) completely independent spanning trees in \(Q_n\).\label{thm:firsttheorem}
\end{theorem}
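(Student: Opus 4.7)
The plan is to follow the outline in Section~2. Set $F=\mathbb{F}_2^k$ and $m=8(2^k-1)$, so $n-m\geq 8\cdot 2^k+7$, and view $Q_n$ as $Q_m\times Q_{n-m}$. To construct the labelling, partition $[m]$ into $2^k-1$ groups $G_\beta$ of size $8$ indexed by the nonzero $\beta\in F$, and let $\ell:Q_m\to F$ be the linear map with $\ell(e_i)=\beta$ for $i\in G_\beta$. Adjacent vertices in $Q_m$ then differ in label, and every vertex has exactly $8$ neighbours of each other label. For each ordered pair of distinct labels $(\alpha,\beta)$, pick distinct coordinates $i_{\alpha,\beta},i_{\beta,\alpha}\in G_{\alpha+\beta}$ and set $f_\alpha(v)=v+e_{i_{\alpha,\ell(v)}}$; a short check shows that the ``standard'' edges $\{v,f_\alpha(v)\}$ are pairwise disjoint across different $\alpha$.

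Now pick a spanning tree $T'$ of $Q_{n-m}$ with at least $2^m$ leaves (easy since $n-m>m$) and reserve pairwise disjoint sets $L_\alpha$ of $2^{m-k}-1$ such leaves, one for each $\alpha\in F$. The tree $T_\alpha$ consists of: (i) a copy of $T'$ in each $\alpha$-labelled column; (ii) in every row $w$, the standard edges $\{(v,w),(f_\alpha(v),w)\}$ for each non-$\alpha$ vertex $v$; and (iii) in each $w\in L_\alpha$, one extra ``junction'' edge $\{(v,w),(u',w)\}$, where $v$ is non-$\alpha$ and $u'$ is an $\alpha$-labelled neighbour of $v$ distinct from $f_\alpha(v)$, chosen so that $u'$ lies in a different pre-junction component of $T_\alpha$ than $f_\alpha(v)$. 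An edge count gives exactly $2^n-1$ edges, and since each junction merges two components, the $2^{m-k}-1$ junctions render $T_\alpha$ connected, hence a spanning tree. Edge-disjointness across different trees follows from $i_{\alpha,\beta}\neq i_{\beta,\alpha}$ for standard edges, and from the disjointness of the $L_\alpha$'s for junctions (with the junction coordinate chosen within $G_{\alpha+\ell(v)}$ but outside $\{i_{\alpha,\ell(v)},i_{\ell(v),\alpha}\}$, possible since each group has $8$ elements).

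The main obstacle is verifying that the $T_\alpha$'s have disjoint interiors (Lemma~\ref{lem:characterisation}). In $T_\alpha$, the interior consists of the $\alpha$-labelled vertices (each of degree $\geq 2$ from its column $T'$ plus the $2^k-1$ standard edges targeting it in each row) together with the non-$\alpha$ junction middles $(v,w)$ for $w\in L_\alpha$. An $\alpha$-labelled vertex is ordinarily a leaf in every $T_\beta$ ($\beta\neq\alpha$), carrying no column-edge and just one standard row-edge in $T_\beta$. The delicate situations are: (a) an $\alpha$-labelled vertex $(u,w)$ selected as a $T_\beta$-junction middle, making it interior in both $T_\alpha$ and $T_\beta$; and (b) the middle $(v,w)$ of a $T_\alpha$-junction, with $\ell(v)=\gamma$, which has column-degree $1$ in $T_\gamma$ (from $T'$) plus $2^k-1$ standard row-edges targeting it in $T_\gamma$, again making it interior in $T_\gamma$. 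I would handle both by insisting that every junction row be a leaf of $T'$ (so the column contributes degree $1$) and then locally redirecting $f_\alpha$ (respectively $f_\gamma$) at that row on the $2^k-1$ vertices currently targeting $u$ (respectively $v$) to alternative neighbours; each such vertex has $7$ other $\alpha$-labelled (resp.\ $\gamma$-labelled) neighbours. The $8$-fold redundancy in the labelling is exactly what makes these simultaneous local adjustments compatible with edge-disjointness and with each other.
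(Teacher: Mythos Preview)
Your overall architecture matches the paper's, but there is a genuine gap in the junction step. A single junction edge at a non-$\alpha$ vertex $v$ joins the columns of $f_\alpha(v)$ and $u'$, and since both are $\alpha$-labelled neighbours of $v$, their difference $f_\alpha(v)+u'$ equals $e_i+e_j$ with $i,j$ lying in the \emph{same} group $G_{\alpha+\ell(v)}$. Thus the ``merge graph'' on the set of $\alpha$-columns has edges only between columns differing by a within-group pair $e_i+e_j$. Such pairs span only the subspace $\{x:\sum_{i\in G_\beta}x_i=0\ \text{for all }\beta\}$, of dimension $7(2^k-1)$, whereas $\ker\ell$ has dimension $8(2^k-1)-k$; for every $k\geq 2$ the former is strictly smaller, so the merge graph is disconnected and no choice of $2^{m-k}-1$ single-edge junctions can make $T_\alpha$ connected. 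The paper circumvents exactly this obstruction by using paths of length~$3$ at each junction (so the column difference is $e_i+e_j+e_l$ with labels summing to zero) and proves in its Claim~3.1 that the resulting graph $G_\alpha$ on $\alpha$-columns is connected.

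Your treatment of interior-disjointness is also sketchier than the paper's. You correctly identify the two problematic situations, but ``locally redirecting'' the $2^k-1$ standard edges incident to a junction middle, simultaneously for all trees and without creating new edge collisions, needs more than an appeal to $8$-fold redundancy: the paper handles this by assigning specific direction-indices ($1,2$ for standard rows, $1$--$4$ inside the junction, and $5,6,7,8$ for the repairs to the two affected trees $T_{\ell(u^{(1)})},T_{\ell(u^{(2)})}$). That part of your sketch is plausibly salvageable along those lines, but the connectivity failure above is not, and fixing it forces you to longer junction paths as in the paper.
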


We consider \(Q_n\) to have vertex set \(\mathbb{F}_2^n\), with standard basis \(\{\mathbf{e}_i: i\in [n]\}\). Then the edges of \(Q_n\) are 
\[E\left(Q_n\right)=\left\{\left\{\mathbf{u},\mathbf{u}+\mathbf{e}_i\right\}:\mathbf{u}\in \mathbb{F}_2^n,i\in [n]\right\}.\]
Notice that equivalently, \(\mathbf{u}\mathbf{u}'\) is an edge of \(Q_n\) if and only if \(\mathbf{u}+\mathbf{u}'=\mathbf{e}_i\) for some \(i\) which we call the \emph{direction} of \(\mathbf{u}\mathbf{u}'\). We set \(F=\mathbb{F}_2^k\), and construct \(2^k\) completely independent spanning trees \(\left\{T_\alpha:\alpha\in F\right\}\) of \(Q_n\) for any \(n\geq2m+1\), where \(m=8\left(2^k-1\right)\). We will regard \(Q_n\) as \(Q_m\times Q_{n-m}\), and notice that every edge is either contained in a \emph{row} \(Q_m\times\{\mathbf{w}\}\), or a \emph{column} \(\{\mathbf{u}\}\times Q_{n-m}\). We will therefore define the behaviour of our trees within rows and columns separately.

\subsection{The labelling}

We first construct our labelling \(\ell:Q_m\to F\) by identifying the \(m\) coordinates of \(Q_m\) with \([8]\times F\setminus \{0\}\). Thus the vertex \(\mathbf{u}\in Q_m\) has coordinates \(u_{(i,\alpha)}\in\mathbb{F}_2\), and we may refer to the basis vectors \(\mathbf{e}_{(i,\alpha)}\). We label \(\mathbf{u}\) with
\[\ell(\mathbf{u})=\sum_{(i,\alpha)\in[8]\times F\setminus\{0\}}u_{(i,\alpha)}\alpha.\]
Now for any \(\mathbf{u},i,\alpha\), we find that \(\ell\left(\mathbf{u}+\mathbf{e}_{(i,\alpha)}\right)=\ell(\mathbf{u})+\alpha\). In particular, any vertex \(\mathbf{u}\) has \(8\) neighbours with label \(\alpha\), namely
\[\{\mathbf{u}+\mathbf{e}_{(i,\ell(\mathbf{u})+\alpha)}: i\in [8]\}.\]

We will regard the column \(\{\mathbf{u}\}\times Q_{n-m}\) as all having the label \(\ell(\mathbf{u})\).

It is interesting, although not relevant, to notice that this labelling is essentially code-theoretic. If we had identified the dimensions of \(Q_m\) with a single copy of \(F\setminus{0}\), then this labelling would construct a Hamming code \(\ell^{-1}(0)\). In our actual labelling, each set \(\ell^{-1}(\alpha)\) corresponds a code in which each block represents \(2^k-1\) bytes, whose parities are constrained to be in a Hamming code---so if one error is transmitted, this code would allow the byte in which it occurred to be identified.

\subsection{The columns}

We define a spanning tree \(T'\) of \(Q_{n-m}\) with a large number of leaves. Set \[S_0=\{\mathbf{u}\in Q_{n-m}: u_{n-m}=0\},\] and notice that \(Q_{n-m}[S_0]\) is isomorphic to \(Q_{n-m-1}\), and so is connected. Take \(T''\) to be a spanning tree of \(Q_{n-m}[S_0]\). Define
\[T'=T''\cup \left\{\{\mathbf{u},\mathbf{u}+\mathbf{e}_{n-m}\}:\mathbf{u}\in S_0\right\}.\]
This joins a new leaf to each vertex of \(T''\), so gives a spanning tree of \(Q_{n-m}\) with at least \(2^{n-m-1}\) leaves. Now on each column \(\{\mathbf{u}\}\times Q_{n-m}\), the tree \(T_\alpha\) will have induced subgraph
\begin{align*}T_\alpha\left[\{\mathbf{u}\}\times Q_{n-m}\right]={}&{}T'\text{ if }\ell(\mathbf{u})=\alpha,\text{ and}\\{}&{}\varnothing\text{ otherwise.}\end{align*}
Note that here we have abusively identified the vertices of \(Q_{n-m}\) with \(\{\mathbf{u}\}\times Q_{n-m}\). Thus in tree \(T_\alpha\), for any vertex \(\mathbf{u}\) of \(Q_m\) with label \(\alpha\), column \(\{\mathbf{u}\}\times Q_{n-m}\) is connected. Additionally, notice that no edge within a column is in more than one \(T_\alpha\), as each column \(\{\mathbf{u}\}\times Q_{n-m}\) only contains edges of \(T_{\ell(\mathbf{u})}\). 

\subsection{The structure}

We would now like to add paths within each \(T_\alpha\) that connect our copies of \(T'\) between columns. We will do this in carefully designed `junction' rows. We first observe that it is sufficient to use paths of length \(3\).

\begin{claim}
    The graph \(G_\alpha\) with vertex set \(\{\mathbf{u}\in Q_m:\ell(\mathbf{u})=\alpha\}\) in which two vertices are adjacent if there is path of length \(3\) between them is connected.\label{clm:Galphaconn}
\end{claim}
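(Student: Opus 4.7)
The plan is to recast the connectivity of $G_\alpha$ as a question of generation in the $\mathbb{F}_2$-vector space $\ker\ell$. Since $\ell$ is $\mathbb{F}_2$-linear, $\ell^{-1}(\alpha)$ is a coset of $\ker\ell$; two vertices $\mathbf{u},\mathbf{u}'$ are adjacent in $G_\alpha$ exactly when $\mathbf{u}+\mathbf{u}'$ lies in
\[
\Delta=\bigl\{\mathbf{e}_{(i_1,\beta_1)}+\mathbf{e}_{(i_2,\beta_2)}+\mathbf{e}_{(i_3,\beta_3)}:\text{pairs distinct},\ \beta_1+\beta_2+\beta_3=0\bigr\}.
\]
Since each $\beta_j\in F\setminus\{0\}$ and any repeated $\beta$ would force a third to equal $0$, the three $\beta_j$'s must in fact be pairwise distinct nonzero elements summing to zero. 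Hence $G_\alpha$ is the Cayley graph of $\ker\ell$ with generating set $\Delta$, and its connectivity is equivalent to $\Delta$ spanning $\ker\ell$ as a vector space.

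The first step is to show that every weight-$2$ element $\mathbf{e}_{(i,\beta)}+\mathbf{e}_{(j,\beta)}$ of $\ker\ell$ (with $i\neq j$ and $\beta\in F\setminus\{0\}$) lies in $\langle\Delta\rangle$. Pick any $\gamma\in F\setminus\{0,\beta\}$, which exists because $|F|=2^k\geq 4$. Then $\mathbf{e}_{(i,\beta)}+\mathbf{e}_{(1,\gamma)}+\mathbf{e}_{(1,\beta+\gamma)}$ and $\mathbf{e}_{(j,\beta)}+\mathbf{e}_{(1,\gamma)}+\mathbf{e}_{(1,\beta+\gamma)}$ both belong to $\Delta$ (distinct pairs follow from $\beta,\gamma,\beta+\gamma$ being pairwise distinct nonzero), and their sum is the target. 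The second step is an induction on Hamming weight: given $c\in\ker\ell$ with $|c|\geq 4$, pick distinct coordinates $(i,\beta),(j,\gamma)\in\operatorname{supp}(c)$ and add an element of $\langle\Delta\rangle$ to reduce the weight. If $\beta=\gamma$, add the weight-$2$ generator from step one, cutting $|c|$ by $2$; otherwise, add $\mathbf{e}_{(i,\beta)}+\mathbf{e}_{(j,\gamma)}+\mathbf{e}_{(l,\beta+\gamma)}\in\Delta$ for any $l\in[8]$ (distinct pairs are automatic since $\beta+\gamma\neq 0$), dropping $|c|$ by $2$ or $3$ depending on whether $(l,\beta+\gamma)$ was already in the support. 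Either way the weight strictly decreases, so the induction closes once $|c|\leq 3$.

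The main obstacle is the first step: producing weight-$2$ codewords from weight-$3$ ones forces $F$ to contain three distinct nonzero elements, i.e.\ $k\geq 2$. This reflects a genuine feature of the problem, since for $k=1$ any two vertices of $Q_8$ with the same label are at even distance and $G_\alpha$ is literally edgeless; however, $k=1$ corresponds to only two spanning trees, a case already handled by Pai and Chang.
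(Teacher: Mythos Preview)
Your argument is correct and takes a genuinely different route from the paper. The paper proceeds graph-theoretically: given $\mathbf{u},\mathbf{u}'$ of label $\alpha$, it takes a shortest $Q_m$-path between them and, by case analysis on its length $r$, either finds internal vertices of label $\alpha$ (reducing to shorter instances) or inserts a short detour to manufacture one. You instead linearise the problem, identifying $G_\alpha$ with the Cayley graph of $\ker\ell$ on the connection set $\Delta$ and showing $\langle\Delta\rangle=\ker\ell$ by weight induction. Your approach is arguably cleaner and, importantly, makes the hidden hypothesis $k\ge 2$ explicit: your Step~1 needs a third nonzero element $\gamma$, and you rightly observe that for $k=1$ the set $\Delta$ is empty and $G_\alpha$ is edgeless. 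The paper's own proof has exactly the same gap (its $r=2$ case asks for a neighbour of $\mathbf{u}^{(1)}$ with label $\neq\alpha$, which does not exist when $k=1$), so your diagnosis is a genuine observation rather than a defect of your method.

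One small slip: in your second inductive case, adding $\mathbf{e}_{(i,\beta)}+\mathbf{e}_{(j,\gamma)}+\mathbf{e}_{(l,\beta+\gamma)}$ drops the weight by $1$ or $3$, not $2$ or $3$ (when $(l,\beta+\gamma)\notin\operatorname{supp}(c)$ you remove two coordinates and add one). The conclusion that the weight strictly decreases is unaffected, so the induction still terminates.
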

\begin{proof}
    We know that \(Q_m\) is connected, so for any \(\mathbf{u},\mathbf{u}'\) in \(Q_m\), there is a path \(\mathbf{u}=\mathbf{u}^{(0)},\mathbf{u}^{(1)},\dots,\mathbf{u}^{(r)}=\mathbf{u}'\) between them in \(Q_m\). The constraint \(\ell(\mathbf{u})=\ell(\mathbf{u}')\) implies that \(r>1\), as otherwise some \(\mathbf{e}_{(i,\alpha)}=\mathbf{u}+\mathbf{u}'\). For the case \(r=2\), choose \(\mathbf{u}^{(2)}\) a neighbour of \(\mathbf{u}^{(1)}\) of label other than \(\alpha\), and \(u_3\) a neighbour of \(\mathbf{u}^{(2)}\) with label \(\alpha\). So in \(G_\alpha\), \(\mathbf{u}\sim \mathbf{u}^{(3)} \sim \mathbf{u}'\).

    If \(r=3\) we're trivially done. Equally, if any \(\mathbf{u}_i\) in the interior of the path has label \(\alpha\), we're done by induction on the length of the path. So we just need to deal with the case where \(r\geq 4\) and \(\ell(\mathbf{u}^{(i)})\neq\alpha\) for all \(2\leq i \leq r=2\). But now define \(\mathbf{u}^{(i)*}\) for such \(i\) to be a neighbour of \(\mathbf{u}^{(i)}\) with label \(\alpha\), and notice that in \(G_\alpha\), \[\mathbf{u}\sim \mathbf{u}^{(2)*}\sim \mathbf{u}^{(3)*}\sim \cdots \sim \mathbf{u}^{(r-2)*}\sim \mathbf{u}'.\] Thus \(G_\alpha\) is connected.
\end{proof}

In particular, for each \(G_\alpha\), we can choose a spanning tree \(K_\alpha\). Each \(K_\alpha\) has fewer edges than vertices, and the vertex sets \(V(K_\alpha)\) partition \(V(Q_m)\), so there are fewer than \(2^m\) pairs \((\alpha,e)\) for \(e\in E(K_\alpha)\). But \(T'\) has at least \(2^{n-m-1}\) leaves, and \(n\geq 2m+1\) ensures that this is at least \(2^m\). Thus for each \(\alpha\) and edge \(\mathbf{u}\mathbf{u}'\in K_\alpha\), we can choose a distinct leaf \(\mathbf{w}_{\alpha,\mathbf{u}\mathbf{u}'}\) of \(T'\).

We will select edges to be in \(T_\alpha\) within row \(Q_m\times\mathbf{w}_{\alpha,\mathbf{u}\mathbf{u}'}\) that include a path of length \(3\) between \(\mathbf{u}\) and \(\mathbf{u}'\). This will ensure that all vertices of label \(\alpha\) are connected in \(T_\alpha\). We call these `junction' rows, and other rows `standard' rows. We will also ensure that every other vertex is a leaf of \(T_\alpha\), adjacent to a vertex of label \(\alpha\). This will build a spanning tree, and with care we can ensure that the \(T_\alpha\) are completely independent using the characterisation of Lemma \ref{lem:characterisation}.

\subsection{Standard rows}
Impose some linear order \(<\) on the elements of \(F\). Now for any \(\mathbf{w}\) which is not of the form \(\mathbf{w}_{\alpha,\mathbf{u}\mathbf{u}'}\) for any \(\alpha\in F\) and \(\mathbf{u}\mathbf{u}'\in K_\alpha\), we call \(Q_m\times \{\mathbf{w}\}\) a \emph{standard row}, in which \(T_\alpha\) has edges
\begin{align*}
E\left(T_\alpha\left[Q_m\times \{\mathbf{w}\}\right]\right)={}&{}\left\{\left\{\mathbf{u},\mathbf{u}+\mathbf{e}_{(1,\alpha+\ell(\mathbf{u}))}\right\}:\mathbf{u}\in Q_m,\ell(\mathbf{u})< \alpha\right\}\cup\\{}&{}\left\{\left\{\mathbf{u},\mathbf{u}+\mathbf{e}_{(2,\alpha+\ell(\mathbf{u}))}\right\}:\mathbf{u}\in Q_m,\alpha<\ell(\mathbf{u})\right\}.
\end{align*}

Note that each of these edges is between some \(\mathbf{u}\) of label other than \(\alpha\), and a vertex of label \(\alpha\), and for each such \(\mathbf{u}\) there is exactly one such edge. As \(T_\alpha\) contains no edges from column \(\{\mathbf{u}\}\times Q_{n-m}\), we find that \(\mathbf{u}\) is a leaf of \(T_\alpha\). Hence each vertex \(\mathbf{u}\) can only be in the interior of \(T_{\ell(\mathbf{u})}\). This is one of the conditions for Lemma \ref{lem:characterisation} that we will need. We now check the other:

\begin{claim}
    No edge of a standard row is in more than one \(T_\alpha\).\label{claim:obvious}
\end{claim}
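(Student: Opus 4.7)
My plan starts with the following key observation: in a standard row \(Q_m\times\{\mathbf{w}\}\), each edge listed in the definition of \(T_\alpha\) runs between a vertex \(\mathbf{u}\) with \(\ell(\mathbf{u})\neq\alpha\) and the vertex \(\mathbf{u}+\mathbf{e}_{(i,\alpha+\ell(\mathbf{u}))}\), which by construction has label \(\ell(\mathbf{u})+(\alpha+\ell(\mathbf{u}))=\alpha\). Thus any edge in \(T_\alpha\) has exactly one endpoint of label \(\alpha\), so if an edge \(e=\{\mathbf{v},\mathbf{v}'\}\) of a standard row lies in both \(T_\alpha\) and \(T_{\alpha'}\) for distinct \(\alpha,\alpha'\), then the pair \(\{\alpha,\alpha'\}\) must coincide with \(\{\ell(\mathbf{v}),\ell(\mathbf{v}')\}\). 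This pair has two distinct elements, since writing the edge as \(\mathbf{v}'=\mathbf{v}+\mathbf{e}_{(i,\beta)}\) gives \(\ell(\mathbf{v}')=\ell(\mathbf{v})+\beta\) with \(\beta\neq 0\).

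To finish, I would use the direction index \(i\) to rule out the two possibilities simultaneously. Assume without loss of generality that \(\gamma:=\ell(\mathbf{v})<\delta:=\ell(\mathbf{v}')\). If \(e\in T_\delta\), then \(\mathbf{v}\) is its non-\(\delta\) endpoint and the inequality \(\gamma<\delta\) places \(e\) in the first clause of the definition, forcing \(i=1\). If instead \(e\in T_\gamma\), then \(\mathbf{v}'\) is the non-\(\gamma\) endpoint and the inequality \(\delta>\gamma\) places \(e\) in the second clause, forcing \(i=2\). Since the edge \(e\) has a single direction, at most one of these cases can hold, giving the required contradiction.

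I do not expect any serious obstacle here: the asymmetric design of the two clauses, with \(i=1\) tied to the smaller-labelled endpoint and \(i=2\) tied to the larger, appears to have been introduced precisely to make this disambiguation mechanical. The only minor point worth noting is that when \(i\geq 3\) the edge lies in no \(T_\alpha\) at all, which is consistent with (and harmless for) the ``at most one'' formulation of the claim.
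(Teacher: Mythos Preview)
Your argument is correct and is essentially the same as the paper's: first observe that an edge of a standard row can lie only in \(T_{\ell(\mathbf{v})}\) or \(T_{\ell(\mathbf{v}')}\), then use the asymmetry between direction indices \(1\) and \(2\) (tied to the order \(<\) on labels) to rule out membership in both. The paper's proof is terser but follows exactly this line.
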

\begin{proof}
    Let \(\mathbf{u}\mathbf{u}'\) be an edge of standard row \(Q_m\times \{\mathbf{w}\}\). Write \(\alpha=\ell(\mathbf{u})\) and \(\alpha'=\ell(\mathbf{u}')\), and suppose \(\alpha<\beta\) w.l.o.g. Now by the definition of a standard row, \(\mathbf{u}\mathbf{u}'\) is in \(T_{\alpha'}\) if and only if it has direction \((1,\alpha+\alpha')\), and \(T_\alpha\) if and only if it has direction \((2,\alpha+\alpha')\). Hence it cannot be in both of these, so as it cannot be in any other \(T_\beta\) by definition, we have the desired.
\end{proof}

\subsection{Junction rows}
We now define the junction rows \(Q_m\times \{\mathbf{w}_{\alpha,\mathbf{a}\mathbf{b}}\}\). Choose a path \(P\) on vertices \[\mathbf{a}=\mathbf{u}^{(0)}\sim\mathbf{u}^{(1)}\sim\mathbf{u}^{(2)}\sim\mathbf{u}^{(3)}=\mathbf{b}\] in \(Q_m\), and w.l.o.g.\ take the \(3\) edges of this path to be in \([3]\times F\setminus\{0\}\). We first define \(T_\alpha\)'s edges by
\[E\left(T_\alpha\left[Q_m\times \{\mathbf{w}_{\alpha,\mathbf{u}\mathbf{u}'}\}\right]\right)=P\cup\left\{\left\{\mathbf{u},\mathbf{u}+\mathbf{e}_{(4,\alpha+\ell(\mathbf{u}))}\right\}:\mathbf{u}\in Q_m\setminus\{\mathbf{u}^{(1)},\mathbf{u}^{(2)}\},\ell(\mathbf{u})\neq \alpha\right\}.\]
Thus all vertices of \(Q_m\setminus\{\mathbf{u}^{(1)},\mathbf{u}^{(2)}\}\) with label other than \(\alpha\) are leaves of \(T_\alpha\). We now modify our design for standard rows to ensure that \(\mathbf{u}^{(1)}\) and \(\mathbf{u}^{(2)}\) are both leaves of all other \(T_\beta\). For each choice of \(\beta\in F\setminus\{\alpha,\ell(\mathbf{u}^{(1)}),\ell(\mathbf{u}^{(2)}\}\), we simply take
\begin{align*}
E\left(T_\beta\left[Q_m\times \{\mathbf{w}\}\right]\right)={}&{}\left\{\left\{\mathbf{u},\mathbf{u}+\mathbf{e}_{(5,\beta+\ell(\mathbf{u}))}\right\}:\mathbf{u}\in Q_m,\ell(\mathbf{u})< \beta\right\}\cup\\{}&{}\left\{\left\{\mathbf{u},\mathbf{u}+\mathbf{e}_{(6,\beta+\ell(\mathbf{u}))}\right\}:\mathbf{u}\in Q_m,\beta<\ell(\mathbf{u})\right\}.
\end{align*}
When building \(T_{\ell(\mathbf{u}^{(i)})}\) for \(i=1,2\), we also have to make sure that \(\mathbf{u}^{(i)}\) is a leaf in order to meet the condition of Lemma \ref{lem:characterisation}. One edge of this tree in the column \(\{\mathbf{u}^{(i)}\}\times Q_{n-m}\) is already incident to \(\mathbf{u}^{(i)}\), so there cannot be any others.  We will build the same construction as other \(T_\beta\) but replace any edges incident to \(\mathbf{u}^{(i)}\). Thus for \(i=1,2\) we take

\begin{align*}
S_i={}&{}\left\{\left\{\mathbf{u},\mathbf{u}+\mathbf{e}_{(5,{\ell(\mathbf{u}^{(i)})}+\ell(\mathbf{u}))}\right\}:\mathbf{u}\in Q_m,\ell(\mathbf{u})< {\ell(\mathbf{u}^{(i)})}\right\}\cup\\{}&{}\left\{\left\{\mathbf{u},\mathbf{u}+\mathbf{e}_{(6,{\ell(\mathbf{u}^{(i)})}+\ell(\mathbf{u}))}\right\}:\mathbf{u}\in Q_m,{\ell(\mathbf{u}^{(i)})}<\ell(\mathbf{u})\right\}.
\end{align*}
Note this matches \(T_\beta\) for other \(\beta\). Finally define
\[E\left(T_{\ell(\mathbf{u}^{(i)})}\left[Q_m\times \{\mathbf{w}\}\right]\right)=\left\{e:e\in S_i,\mathbf{u}^{(i)}\notin e\right\}\cup\left\{\left\{\mathbf{u},\mathbf{u}+\mathbf{e}_{(6+i,{\ell(\mathbf{u}^{(i)})}+\ell(\mathbf{u}))}\right\}:\mathbf{u}\mathbf{u}^{(i)}\in S_i\right\}.\]
Thus in this row, every vertex is a leaf of all but one \(T_\beta\), and by the reasoning of Claim \ref{claim:obvious}, no edge is in two \(T_\beta\). And each vertex of label other than \(\beta\) is adjacent to one.

Now we have constructed our trees \(T_\alpha\). Note that within each \(T_\alpha\), all vertices of label other than \(\a lpha\) are adjacent to one of label \(\alpha\). But all of these are connected by the copies of \(T'\) in their columns, and paths of each junction row \(Q_m\times \{\mathbf{w}_{\beta,\mathbf{a}\mathbf{b}}\}\). Thus these are indeed spanning trees. But by construction, these trees are edge-disjoint, and no vertex is in the interior of more than one \(T_\alpha\). Thus they are completely independent, and so we have proven Theorem \ref{thm:firsttheorem}.

\section{Evaluating the diameters}
We now ask how small we can make the diameters of our spanning trees. So far, we have not tried to control \(T'\) and the \(G_\alpha\) in a way to minimise this. We define the \emph{broadcast tree} on \(Q_n\) to have edges
\[E=\left\{\left\{\mathbf{u},\mathbf{u}+\mathbf{e}_i\right\}: \mathbf{u}\in Q_n, u_j=0\ \forall j\geq i\right\}.\]

\begin{claim}
    The broadcast tree on \(Q_n\) has diameter \(2n-1\), and this is minimal across all spanning trees.\label{clm:minimalconstant}
\end{claim}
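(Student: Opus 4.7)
My plan has two parts. For the diameter computation, note that the edge condition $u_j=0$ for all $j\geq i$ identifies the broadcast tree with the rooted tree having root $\mathbf{0}$, in which each non-root $\mathbf{v}$ has parent $\mathbf{v}-\mathbf{e}_i$ with $i$ the largest index satisfying $v_i=1$. In particular the depth of $\mathbf{v}$ equals its Hamming weight, and the subtree rooted at a child $\mathbf{e}_i$ of $\mathbf{0}$ consists of the vertices $\mathbf{e}_i+\sum_{j\in S}\mathbf{e}_j$ with $S\subseteq\{i+1,\dots,n\}$; the deepest such vertex is $\mathbf{e}_i+\mathbf{e}_{i+1}+\cdots+\mathbf{e}_n$, at depth $n-i+1$ from $\mathbf{0}$. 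The two deepest child-subtrees of $\mathbf{0}$ are therefore those at $\mathbf{e}_1$ and $\mathbf{e}_2$, giving a longest path through $\mathbf{0}$ of length $n+(n-1)=2n-1$. The analogous calculation for an internal lowest common ancestor $\mathbf{u}$ with highest $1$-bit at index $i\geq 1$ yields a path of length at most $(n-i)+(n-i-1)=2n-2i-1\leq 2n-3$, so the broadcast tree has diameter exactly $2n-1$.

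For minimality, I would apply a center/ball argument. In any tree of diameter $d$, the midpoint $c=v_{\lfloor d/2\rfloor}$ of a diametrical path $v_0,\dots,v_d$ has eccentricity at most $\lceil d/2\rceil$: indeed, for any vertex $u$ at least one of the tree-paths $u\to v_0$ or $u\to v_d$ must pass through $c$, and as its length is at most $d$, this bounds $d_T(u,c)\leq\lceil d/2\rceil$. Now let $T$ be any spanning tree of $Q_n$ of diameter $d$, and let $c$ be such a center. Since $T$ is a subgraph of $Q_n$, every vertex of $Q_n$ is also at hypercube-distance at most $\lceil d/2\rceil$ from $c$, so the Hamming ball of that radius around $c$ must cover $Q_n$, forcing
\[\sum_{k=0}^{\lceil d/2\rceil}\binom{n}{k}\;\geq\;2^n.\]
Since $\sum_{k=0}^{n-1}\binom{n}{k}=2^n-1$, this requires $\lceil d/2\rceil\geq n$, and hence $d\geq 2n-1$.

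The argument is short, and I anticipate no substantive obstacle. The only non-routine ingredient is the tree fact that the midpoint of a diametrical path has eccentricity $\lceil d/2\rceil$, which takes a single line, and the ball-covering inequality is then immediate.
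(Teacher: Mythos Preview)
Your proof is correct and follows essentially the same plan as the paper: depth in the broadcast tree equals Hamming weight, and minimality is obtained via a center argument. The one difference is that for the lower bound the paper bypasses your ball-counting inequality by simply observing that any center vertex $\mathbf{u}$ has tree-distance at least $n$ from its antipode $\mathbf{1}-\mathbf{u}$, which immediately gives eccentricity $\geq n$ and hence diameter $\geq 2n-1$.
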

\begin{proof}
    Within the broadcast tree, each vertex \(\mathbf{u}\) has distance \(\left|\left\{i:u_i=1\right\}\right|\) from \(\mathbf{0}\). But the only vertex with distance \(n\) from \(\mathbf{0}\) is \(\mathbf{1}=(1,\dots,1)\), so it has diameter \(2n-1\).

    Likewise, any spanning tree has a centre which is a vertex, or pair of vertices, whose maximum distance to any other vertex is minimal. This is the middle, or middle two vertices, of any longest path, so it suffices to show a vertex in the centre has distance \(n\) from some other vertex. But any vertex \(\mathbf{u}\) has distance \(n\) from \(\mathbf{1}-\mathbf{u}\).
\end{proof}

We can take the broadcast tree on \(Q_{n-m}\) to be our \(T'\), as it has the required number of leaves. We will now show that we can use the same structure on \(G_\alpha\).

\begin{claim}
    Each \(G_\alpha\) contains a copy of \(Q_{m-k}\) with  the same vertex set as \(G_\alpha\).
\end{claim}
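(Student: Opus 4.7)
The map $\ell$ is $\mathbb{F}_2$-linear and surjective, so $\ker\ell$ has dimension $m-k$ and $|\ell^{-1}(\alpha)|=2^{m-k}=|V(Q_{m-k})|$. The key observation is that any $\mathbf{v}\in\ker\ell$ which is a sum of three distinct basis vectors $\mathbf{e}_{(i_1,\beta_1)}+\mathbf{e}_{(i_2,\beta_2)}+\mathbf{e}_{(i_3,\beta_3)}$ (necessarily with $\beta_1+\beta_2+\beta_3=0$) yields, for every $\mathbf{u}\in\ell^{-1}(\alpha)$, a genuine length-$3$ path $\mathbf{u},\mathbf{u}+\mathbf{e}_{(i_1,\beta_1)},\mathbf{u}+\mathbf{e}_{(i_1,\beta_1)}+\mathbf{e}_{(i_2,\beta_2)},\mathbf{u}+\mathbf{v}$ in $Q_m$, and hence an edge of $G_\alpha$. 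So it suffices to exhibit a basis $\mathbf{v}_1,\dots,\mathbf{v}_{m-k}$ of $\ker\ell$ whose every element is a sum of three distinct basis vectors; translating the Cayley graph of $(\ker\ell,+)$ with this generating set onto the coset $\ell^{-1}(\alpha)$ then gives the required copy of $Q_{m-k}$ as a subgraph of $G_\alpha$.

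I would build this basis in two stages, using the decomposition $\mathbb{F}_2^m=\bigoplus_{i=1}^8 V_i$ with $V_i=\mathrm{span}\{\mathbf{e}_{(i,\alpha)}:\alpha\in F\setminus\{0\}\}$. Within each $V_i$, the restriction $\ker\ell|_{V_i}$ is exactly the binary Hamming code of length $2^k-1$, whose parity-check matrix has columns ranging over all nonzero elements of $F$. A standard coding-theoretic fact is that, for $k\geq 2$, this code is spanned by its weight-$3$ codewords (which correspond to the $2$-dimensional subspaces of $F$); extracting a basis yields $2^k-1-k$ weight-$3$ vectors inside $V_i$, and so $8(2^k-1-k)=m-8k$ linearly independent weight-$3$ vectors altogether, spanning $\bigoplus_i\ker\ell|_{V_i}$.

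For the remaining $7k$ basis vectors, I would observe that the quotient $\ker\ell/\bigoplus_i\ker\ell|_{V_i}$ is isomorphic to $\{(f_1,\dots,f_8)\in F^8:\sum_i f_i=0\}$ via $\mathbf{v}\mapsto(\ell|_{V_i}(\mathbf{v}^{(i)}))_i$, and so has dimension $7k$. For each $i\in\{2,\dots,8\}$ and each element $\gamma_j$ of a fixed basis $\gamma_1,\dots,\gamma_k$ of $F$, pick any $\beta\in F\setminus\{0,\gamma_j\}$ (available since $|F|\geq 4$) and set $\mathbf{w}_{i,j}=\mathbf{e}_{(1,\gamma_j)}+\mathbf{e}_{(i,\beta)}+\mathbf{e}_{(i,\beta+\gamma_j)}$; this is a weight-$3$ element of $\ker\ell$ projecting to $\gamma_j(\mathbf{e}_1+\mathbf{e}_i)$ in the quotient. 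Since $\{\gamma_j\}$ is a basis of $F$ and the $\mathbf{e}_1+\mathbf{e}_i$ generate the sum-zero subspace of $\mathbb{F}_2\mathbf{e}_1\oplus\cdots\oplus\mathbb{F}_2\mathbf{e}_8$, the $7k$ such projections are independent and form a basis of the quotient, combining with the first stage to give the required weight-$3$ basis of $\ker\ell$.

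The principal obstacle is the coding-theoretic lemma that the Hamming code is spanned by its weight-$3$ codewords; this is classical, and a concrete basis can be produced by ordering $F\setminus\{0\}$ lexicographically by Hamming weight in a fixed basis and processing non-basis labels in that order, each time adjoining a weight-$3$ vector whose largest coordinate has not previously appeared (so that independence is automatic by triangularity). Everything else is essentially bookkeeping in linear algebra.
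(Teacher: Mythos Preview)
Your proof is correct, but it takes a different and more laborious route than the paper's. The paper simply observes that, by the connectivity of \(G_\alpha\) established in the preceding claim, every element of \(\ker\ell\) is a sum of weight-\(3\) kernel elements (each edge of \(G_\alpha\) contributes one such element as the difference of its endpoints); extracting a basis \(B_\alpha\) from this spanning set of weight-\(3\) vectors immediately yields the Cayley-graph copy of \(Q_{|B_\alpha|}=Q_{m-k}\) inside \(G_\alpha\). Your argument instead builds such a basis from scratch via the block decomposition \(\mathbb{F}_2^m=\bigoplus_{i=1}^8 V_i\), invoking the coding-theoretic fact that each Hamming code \(\ker\ell|_{V_i}\) is spanned by its weight-\(3\) words and then supplying \(7k\) explicit cross-block weight-\(3\) vectors to handle the quotient. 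This is entirely valid and has the virtue of being self-contained (it does not rely on the earlier connectivity claim) and fully explicit, but it is considerably longer; the paper's approach buys brevity by reusing work already done. Note also that your construction, like the claim itself, tacitly needs \(k\geq 2\): for \(k=1\) every vector in \(\ker\ell\) has even weight, so there are no weight-\(3\) kernel elements at all.
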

\begin{proof}
    Notice that by Claim \ref{clm:Galphaconn}, the vertices of \(G_\alpha\) are generated as a vector space by
    \[S_\alpha=\left\{\mathbf{e}_i+\mathbf{e}_j+\mathbf{e}_k:\ell\left(\mathbf{e}_i+\mathbf{e}_j+\mathbf{e}_k\right)=\alpha\right\}.\]
    Let \(B_\alpha\subset S_\alpha\) be a basis of \(V\left(G_\alpha\right)\). Notice that for any choice of \(\mathbf{u}\in G_\alpha,\mathbf{b}\in B_\alpha\), we have \(\mathbf{u}(\mathbf{u}+\mathbf{b})\in G_\alpha\). Hence \(G_\alpha\) contains a copy of \(Q_{|B_\alpha|}\) with the same vertex set. But
    \[\left|V\left(G_\alpha\right)\right|=2^{m-k},\]
    so we must have \(\left|B_\alpha\right|=m-k\). 
\end{proof}

This guarantees that we can choose our spanning trees \(K_\alpha\) to have diameter at most \(2(m-k)-1\). Notice that each edge of \(K_\alpha\) corresponds to a path of length \(3\) in \(Q_n\). A path between two vertices in \(T_\alpha\) must use the junction rows of a corresponding path in \(K_\alpha\), and travel between these junction rows using the copies of \(T'\) in the columns.

Pick distinct \(\mathbf{a}_{\alpha}\in Q_{n-2m-1+k}\), noting that we can do this as \(n\geq 2m-1\). Now each \(\{\mathbf{a}_{\alpha}\}\times Q_{m-k+1}\in Q_{n-m}\) induces a broadcast tree as a subgraph of \(T'\). By choosing the \(\mathbf{w}_{\alpha,e}\) from this, we can guarantee the paths between junction rows are no longer than \(2(m-k+1)-1\)

If the endpoints are not of label \(\alpha\), we will also need an edge within a row at either end of the path. This shows the diameter of \(T_\alpha\) is at most
\[2+2(2(n-m)-1)+3\left(2(m-k)-1\right)+(2(m-k)-2)(2(m-k+1)-1)=4n+\Theta\left(2^{2k}\right).\]
This is quadratic in the number of spanning trees, and so quadratic in \(n\) when we build linearly many spanning trees. In fact we will be able to construct \(T_\alpha\) with diameter \((2+o(1))n\) with a more careful construction.

\section{Constructing trees with linear diameter}
Our previous construction was inefficient in that it used a whole row for each individual edge of \(K_\alpha\). This meant that paths had to travel between rows a large number of times. We redesign our row structure to use a constant number of junction rows. We will be able to construct linearly many completely independent spanning trees with linear diameter. Indeed, our constant for the diameter is the best possible, by Claim \ref{clm:minimalconstant}.

\begin{theorem}
    For \(k\) a positive integer, and \(n\geq 6\cdot 2^k + k + 4\), there exist \(2^k\) completely independent spanning trees in \(Q_n\) of diameter at most \(2(n+9k+11)=\left(2+o(1)\right)n\).\label{thm:secondtheorem}
\end{theorem}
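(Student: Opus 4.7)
The construction in Theorem \ref{thm:firsttheorem} wastes an entire junction row on each individual edge of $K_\alpha$, so the longest path in $T_\alpha$ must cross $\Theta(|K_\alpha|)$ junction rows with a full column traversal between successive ones. To cut the diameter down to $(2+o(1))n$, I would redesign the junction rows so that a bounded (in $k$) number of them already carry the entire connection tree on the label-$\alpha$ columns, at the cost of packing many length-$3$ paths into each single junction row.

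Concretely, take $T'$ to be the broadcast tree on $Q_{n-m}$ from Claim \ref{clm:minimalconstant}, whose diameter is $2(n-m)-1$, and choose the junction-row leaves $\mathbf{w}_1,\dots,\mathbf{w}_c$ to be leaves of $T'$ clustered within $T'$-distance $O(k)$ of the broadcast centre. Inside each junction row $Q_m\times\{\mathbf{w}_i\}$, install a carefully chosen family of disjoint length-$3$ paths in $Q_m$ connecting label-$\alpha$ vertices, enough so that across all $c$ junction rows the label-$\alpha$ vertices become connected in $T_\alpha$. Widen the coordinate budget from $[8]\times(F\setminus\{0\})$ to $[C]\times(F\setminus\{0\})$ for some constant $C$ depending on $k$, and partition these directions so that each $T_\alpha$ and each junction-row role has its own direction class; this preserves edge-disjointness in the style of Claim \ref{claim:obvious} and keeps every non-label-$\alpha$ vertex a leaf of every $T_\alpha$, so Lemma \ref{lem:characterisation} still applies. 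The diameter analysis then sums one column-$T'$ traversal at each end ($\leq 2(n-m)-1$ each), at most two standard-row leaf edges, and an $O(k)$-length excursion through the $\mathbf{w}_i$'s (bounded both because the $\mathbf{w}_i$ are close in $T'$ and because the junction-row subgraph of $T_\alpha$ inside a single row is engineered to have small diameter), yielding $2(n-m)+O(k)=2n+O(k)$ of the required shape.

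The main obstacle is the combinatorial packing inside each junction row. Unlike in Section 3, where a junction row carried a single length-$3$ path and coordinate directions could be chosen one at a time, here I must assign directions to every one of the many length-$3$ paths constituting the $T_\alpha$-connections in the row, simultaneously respecting (i) edge-disjointness, so that the same edge is not reused within $T_\alpha$ nor by any $T_\beta$ also using this junction row, (ii) the leaf property, so that each interior vertex of a length-$3$ path for $T_\alpha$ lies in the interior only of $T_\alpha$, and (iii) low diameter, so that the junction-row piece of $T_\alpha$ has diameter $O(k)$. Once a valid assignment is produced (the bulk of the technical work), spanning, edge-disjointness, and interior-disjointness follow by checking each row type separately in the manner of Section 3, and the diameter bound follows from the preceding analysis.
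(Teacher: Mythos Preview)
Your high-level strategy is the right one, and matches the paper's: replace the one-edge-per-row junctions by a bounded number of junction rows, each carrying a large piece of the connection structure on label-$\alpha$ columns. But the proposal is a plan, not a proof, and the part you flag as ``the bulk of the technical work'' is exactly where the content lies. Two concrete issues:

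First, your suggestion to ``widen the coordinate budget from $[8]\times(F\setminus\{0\})$ to $[C]\times(F\setminus\{0\})$ for some constant $C$ depending on $k$'' cannot yield the stated bound $n\geq 6\cdot 2^k+k+4$: if $C$ grows with $k$, then $m=C(2^k-1)$ grows faster than $6\cdot 2^k$, and you lose the $\tfrac1{12}$ leading constant. The paper in fact \emph{shrinks} the budget to $[6]\times(F\setminus\{0\})$. The reason it can get away with so few direction classes while packing an entire broadcast-tree-sized structure into a single junction row is a parity trick you have not identified: in Claim~\ref{clm:builder}, the forest $G$ placed inside $T_\alpha$ is required to live on vertices satisfying two fixed $\mathbb{F}_2$-parity constraints on blocks of coordinates. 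The neighbour map $n_\beta(\mathbf{u})$ for the other trees is then defined so that $n_\beta(\mathbf{u})$ automatically \emph{violates} one of those parities, hence avoids every vertex of $G$. This is what simultaneously enforces edge-disjointness and the leaf property without needing a separate direction class per junction-row role.

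Second, the junction rows do not carry ``families of disjoint length-$3$ paths''; they carry translated broadcast trees. The paper uses nine junction rows $G_1,\ldots,G_9$, where $G_1$ is a broadcast tree on $Q_{[2]\times F\setminus\{0\}}$ and each subsequent $G_i$ is a union of broadcast trees on $Q_{\{1\}\times B\cup\{j\}\times F\setminus\{0\}}$ (one per already-reached vertex), so that $G_1,\ldots,G_9$ together sweep out all of $Q_m$. The parity constraints are what force the split into $9$ rather than, say, $5$ rows: at the last two stages one must stratify by the parities in blocks $3$ and $4$, giving $2$ and then $4$ rows respectively. This sequential ``peel off one block of coordinates per junction row'' design is what makes the in-row diameter $O(2^k)=O(m)$ and the total excursion through junction rows $O(k)$ between rows, giving the radius bound in Claim~\ref{clm:boring}. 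Your sketch does not supply any such mechanism, and without it the packing constraints you list cannot be met with only $6$ direction classes.
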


We again consider \(Q_n=Q_m\times Q_{n-m}\), but now take \(m=6\left(2^k-1\right)\), and identify the dimensions of \(Q_m\) with \([6]\times F\setminus\{0\}\) as before. We also have \(n-m\geq k+10\). As before, we label \(Q_m\) by
\[\ell(\mathbf{u})=\sum_{(i,\alpha)\in[6]\times F\setminus\{0\}}u_{(i,\alpha)}\alpha.\]
\subsection{The columns}\label{subsec:lindiam-columns}
We define the edges of each \(T_\alpha\) within columns in the same way, except we now define \(T'\) more carefully.
Let \(T''\) be the broadcast tree on \(Q_{n-m-1}=\{\mathbf{w}\in Q_{n-m}:w_{n-m}=0\}\). Recall that \(F=\mathbb{F}_2^k\), so we can consider \(V(Q_{n-m})=V(Q_{n-m-k-1})\times F \times \mathbb{F}_2\). Now set
\[W_\alpha=\left\{(\mathbf{0},\alpha,1)+\mathbf{e}_i:i \in [n-m-k-1]\right\},\]
and define
\begin{align*}E(T')={}&{}E(T'')\cup\left\{\{\mathbf{w},\mathbf{w}+\mathbf{e}_{n-m}\}:\mathbf{w}\in \left(Q_{n-m}\times\{1\}\right)\setminus W_\alpha\right\}\\{}&{}\cup\left\{\{(\mathbf{0},\alpha,1),\mathbf{w}\}:\mathbf{w}\in W_\alpha\right\}.\end{align*}
Notice that \(T'\) has the same diameter as the broadcast tree on \(Q_{n-m}\), and each vertex still has distance from \(\mathbf{0}\) within \(T'\) at most \(n-m\). In particular, every vertex has distance at most \(n-m+k+1\) from each \((\mathbf{0},\alpha,1)\). Additionally, every \((\mathbf{0},\alpha,1)\) is adjacent to at least \(9\) leaves, namely the vertices of \(W_\alpha\).

\subsection{The rows}
For each \(\alpha\), we will choose \(9\) distinct \(\mathbf{w}_{\alpha,i}\in W_\alpha\), and build junction rows in these. Note that we can do this as we have taken \(n-m-k-1\geq 9\). We will keep the same definition of the standard row, i.e. for all \(\mathbf{w}\) other than these \(\mathbf{w}_{\alpha,i}\), we have
\begin{align*}
E\left(T_\alpha\left[Q_m\times \{\mathbf{w}\}\right]\right)={}&{}\left\{\left\{\mathbf{u},\mathbf{u}+\mathbf{e}_{(1,\alpha+\ell(\mathbf{u}))}\right\}:\mathbf{u}\in Q_m,\ell(\mathbf{u})< \alpha\right\}\cup\\{}&{}\left\{\left\{\mathbf{u},\mathbf{u}+\mathbf{e}_{(2,\alpha+\ell(\mathbf{u}))}\right\}:\mathbf{u}\in Q_m,\alpha<\ell(\mathbf{u})\right\}.
\end{align*}

We first make a general claim that will allow us to build junction rows while meeting the conditions for Lemma \ref{lem:characterisation}.

\begin{claim}
    Suppose \(G\subset Q_m\) meets the following conditions:
    \begin{enumerate}
        \item \(G\) is a forest.
        \item For some \(c\), all the edges of \(G\) have a direction in \(\{1,c\}\times F\setminus \{0\}\).
        \item Every vertex with positive degree in \(G\) is connected to a vertex of label \(\alpha\).
        \item There exist \(r,s\in [6]\setminus\{1,c\}\) and \(a,b\in \mathbb{F}_2\) such that for any vertex \(\mathbf{u}\) of positive degree in \(G\),
        \[\sum_{\alpha\in F\setminus\{0\}}u_{r,\alpha}=a,\]
        and
        \[\sum_{\alpha\in F\setminus\{0\}}u_{s,\alpha}=b.\]
    \end{enumerate}
    Then at a leaf \(\mathbf{w}\) of \(T'\), we can construct a row \(Q_m\times\{\mathbf{w}\}\) such that 
    \begin{enumerate}
        \item \(G\subset T_\alpha\left[Q_m\times\{\mathbf{w}\}\right]\),
        \item For each \(\beta\), the induced subgraph \(T_\beta\left[Q_m\times\{\mathbf{w}\}\right]\) is a forest in which every vertex of \(\mathbf{u}\) is connected to some vertex of label \(\beta\),
        \item the forests \(T_\beta\left[Q_m\times\{\mathbf{w}\}\right]\) are edge-disjoint, and
        \item each vertex \(\mathbf{u}\) is in the interior of at most one resulting \(T_\beta\) on the entire \(Q_n\). 
    \end{enumerate}\label{clm:builder}
\end{claim}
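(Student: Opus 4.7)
Split $Q_m$'s six edge-directions into two blocks: $\{1,c\}$ (those used by $G$) will serve $T_\alpha$, while the four remaining directions $r,s$ and $\{d_1,d_2\}:=[6]\setminus\{1,c,r,s\}$ will serve each other $T_\beta$. Write $V(G)$ for the set of vertices of positive degree in $G$, and $A:=\{\mathbf{u}:\sum_\gamma u_{(r,\gamma)}=a,\,\sum_\gamma u_{(s,\gamma)}=b\}$, so that hypothesis (4) reads $V(G)\subset A$. Take $T_\alpha[Q_m\times\{\mathbf{w}\}]$ to be $G$ together with, for each $\mathbf{u}\notin V(G)$ with $\ell(\mathbf{u})\ne\alpha$, the single edge out of $\mathbf{u}$ in direction $(1,\alpha+\ell(\mathbf{u}))$ if $\ell(\mathbf{u})<\alpha$, or $(c,\alpha+\ell(\mathbf{u}))$ if $\ell(\mathbf{u})>\alpha$. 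For each $\beta\ne\alpha$, take $T_\beta[Q_m\times\{\mathbf{w}\}]$ to contain, for every $\mathbf{u}$ with $\ell(\mathbf{u})\ne\beta$, one edge from $\mathbf{u}$ to its label-$\beta$ neighbour, in direction $(r,\beta+\ell(\mathbf{u}))$ if $\mathbf{u}\in A$ and $\ell(\mathbf{u})<\beta$; $(s,\beta+\ell(\mathbf{u}))$ if $\mathbf{u}\in A$ and $\ell(\mathbf{u})>\beta$; $(d_1,\beta+\ell(\mathbf{u}))$ if $\mathbf{u}\in A^c$ and $\ell(\mathbf{u})<\beta$; and $(d_2,\beta+\ell(\mathbf{u}))$ if $\mathbf{u}\in A^c$ and $\ell(\mathbf{u})>\beta$.

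\textbf{Key step.} The central observation is that every $T_\beta$-edge with $\beta\ne\alpha$ has its target in $A^c$: directions $r$ and $s$ each flip exactly one of the two parities defining $A$, while $d_1,d_2$ preserve both. Hence no vertex of $V(G)$ is the target of any $T_\beta$-edge with $\beta\ne\alpha$. Combined with the fact that at a leaf $\mathbf{w}$ of $T'$ each vertex's column-degree in its own label's tree is $1$ and its column-degree in every other tree is $0$, this forces every $V(G)$-vertex to be a leaf of each $T_\beta$ with $\beta\ne\alpha$.

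\textbf{Remaining checks.} Each $T_\beta[Q_m\times\{\mathbf{w}\}]$ with $\beta\ne\alpha$ is a disjoint union of stars centred at label-$\beta$ vertices, hence a forest in which every vertex is trivially connected to a label-$\beta$ vertex; the tree $T_\alpha[Q_m\times\{\mathbf{w}\}]$ is $G$ together with stars whose leaves lie in $V(G)^c$ and so have no $G$-edges, so no cycle forms, and connectivity to a label-$\alpha$ vertex holds for $V(G)$-vertices by hypothesis (3) and for the rest by construction. Edge-disjointness between $T_\alpha$ and each $T_\beta$ with $\beta\ne\alpha$ is immediate from the direction split $\{1,c\}$ versus $\{r,s,d_1,d_2\}$, while edge-disjointness between two distinct $T_{\beta_1},T_{\beta_2}$ follows from the label-order asymmetry exactly as in Claim \ref{claim:obvious}. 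Finally, for a vertex $\mathbf{u}$ of label $\gamma$ in this row, being interior of $T_\beta$ with $\beta\ne\alpha$ forces $\gamma=\beta$ and $\mathbf{u}\in A^c$, while being interior of $T_\alpha$ forces either $\gamma=\alpha$ or $\mathbf{u}\in V(G)\subset A$; since distinct $\beta$ demand distinct $\gamma$, and $V(G)$ is disjoint from $A^c$, these cases are mutually exclusive.

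\textbf{Main obstacle.} The principal difficulty is finding an assignment of the six directions that simultaneously keeps $V(G)$-vertices out of the interior of every $T_\beta$ with $\beta\ne\alpha$, avoids edge collisions between the $T_\beta$, and still leaves room for $G$ itself. Hypothesis (4)'s two independent parities are precisely what let $r$ and $s$ be used as ``escape'' directions out of $A$ while $d_1,d_2$ handle $A^c$, and it is this bookkeeping that carries the whole argument.
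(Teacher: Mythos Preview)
Your proof is correct and follows essentially the same strategy as the paper's: reserve the directions $\{1,c\}$ for $T_\alpha$, and for every other $T_\beta$ send each non-$\beta$ vertex to a label-$\beta$ neighbour chosen so that the target necessarily fails the parity constraint defining $A$, hence lies outside $V(G)$. The only cosmetic difference is the bookkeeping of the four remaining directions: the paper keys the choice on a \emph{single} parity (the $r$-parity when $\ell(\mathbf u)<\beta$, the $s$-parity when $\ell(\mathbf u)>\beta$), whereas you key on full membership in $A$ first and then on the order; both schemes guarantee the target lands in $A^c$ and both use the order asymmetry to separate $T_{\beta_1}$ from $T_{\beta_2}$ exactly as in Claim~\ref{claim:obvious}. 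The paper also uses only direction $1$ (not $c$) for the pendant edges added to $T_\alpha$, but your variant works equally well.
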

\begin{proof}
    We adapt our previous construction. Without loss of generality, assume that \(c=2,r=5\) and \(s=6\). Let \(V_0\) be the set of vertices of degree zero in \(G\) with labels other than \(\alpha\). Now set
    \[E(T_\alpha\left[Q_m\times\{\mathbf{w}\}\right])=E(G)\cup\left\{\left\{\mathbf{u},\mathbf{u}+\mathbf{e}_{(1,\alpha+\ell(\mathbf{u}))}\right\}:\mathbf{u}\in V_0\right\}.\]
    By the definition of \(V_0\), none of the edges we add form a cycle, as we only add one edge to each isolated vertex of label other than \(\alpha\). So this is indeed a forest in which every vertex of \(\mathbf{u}\) is connected to some vertex of label \(\alpha\).
    
    We now construct the remaining \(T_\beta\left[Q_m\times\{\mathbf{w}\}\right]\), which will be edge-disjoint forests in which every vertex of label other than \(\beta\) is adjacent to a vertex of label \(\beta\) and no other vertices. We must additionally ensure that for each \(\beta\neq\alpha\), any vertex of label \(\beta\) with positive degree in \(G\) has degree zero in \(T_\beta\left[Q_m\times\{\mathbf{w}\}\right]\). This will be enough to meet the conditions of our lemma.

    We will be able to do this using the parity condition we took on \(G\). For each \(\mathbf{u}\) with label other than \(\beta\), we will choose its one neighbour to be
    \begin{align*}
        n_{\beta}(\mathbf{u})={}&{}\mathbf{u}+\mathbf{e}_{(3,\beta+\ell(\mathbf{u}))}{}&{}\text{ if }\ell(\mathbf{u})<\beta\text{ and }\sum_{\alpha\in F\setminus\{0\}}u_{(5,\alpha)}\neq a,\\
        {}&{}\mathbf{u}+\mathbf{e}_{(4,\beta+\ell(\mathbf{u}))}{}&{}\text{ if }\beta<\ell(\mathbf{u})\text{ and }\sum_{\alpha\in F\setminus\{0\}}u_{(6,\alpha)}\neq b,\\
        {}&{}\mathbf{u}+\mathbf{e}_{(5,\beta+\ell(\mathbf{u}))}{}&{}\text{ if }\ell(\mathbf{u})<\beta\text{ and }\sum_{\alpha\in F\setminus\{0\}}u_{(5,\alpha)}=a,\\
        {}&{}\mathbf{u}+\mathbf{e}_{(6,\beta+\ell(\mathbf{u}))}{}&{}\text{ if }\beta<\ell(\mathbf{u})\text{ and }\sum_{\alpha\in F\setminus\{0\}}u_{(6,\alpha)}=b.\\
    \end{align*}

    Now notice that for each \(\mathbf{u}\), the vertex \(n_{\beta}(\mathbf{u})\) has degree zero in \(G\), as by construction, we must have
    \[\left(\sum_{\alpha\in F\setminus\{0\}}u_{(5,\alpha)},\sum_{\alpha\in F\setminus\{0\}}u_{(6,\alpha)}\right)\neq (a,b).\]
    Hence for each \(\beta\), we take
    \[E(T_\beta\left[Q_m\times\{\mathbf{w}\}\right])=\left\{\mathbf{u}n_{\beta}(\mathbf{u}):\mathbf{u}\in Q_m, \ell(\mathbf{u})\neq \beta \right\}.\]
    Now every edge of \(T_\beta\left[Q_m\times\{\mathbf{w}\}\right]\) has an endpoint of degree one, so this is certainly a forest. As before, the ordering ensures that all the \(T_\beta\left[Q_m\times\{\mathbf{w}\}\right]\) are edge-disjoint. Thus we have met all our desired conditions.
\end{proof}

Take \(B\) to be a basis of \(F\), and notice that for every \(\mathbf{u}\in Q_m\), there is exactly one vertex \(\mathbf{u}'\in Q_m\) with label \(\alpha\) such that \(u_{i,\beta}=u'_{i,\beta}\) for every index \((i,\beta)\notin \{1\}\times B\). For \(S\subseteq [6]\times F\setminus\{0\}\), write
\[Q_S=\{\mathbf{u}\in Q_m: u_{(i,\alpha)}=0\ \forall (i,\alpha)\in\left([6]\times F\setminus\{0\}\right)\setminus S\}.\]
The idea will be that junction row \(Q_m\times\{\mathbf{w}_{\alpha,1}\}\) acts like a broadcast tree on \(Q_{[2]\times F\setminus \{0\}}\), and rows \(Q_m\times\{\mathbf{w}_{\alpha,2}\}\) and \(Q_m\times\{\mathbf{w}_{\alpha,3}\}\) will respectively extend this to \(Q_{[3]\times F\setminus \{0\}}\) and \(Q_{[4]\times F\setminus \{0\}}\). We will split the edges that extend this to \(Q_{[5]\times F\setminus \{0\}}\) across the two rows \(Q_m\times\{\mathbf{w}_{\alpha,4},\mathbf{w}_{\alpha,5}\}\), and finally use the remaining \(\mathbf{w}_{\alpha,i}\) to extend this to all the columns of \(Q_m\) with label \(\alpha\).

Let \(\BT(Q_S)\) be the broadcast tree on \(Q_S\), where \(Q_S\) is identified with \(Q_{|S|}\) in the natural way. Hence in \(\BT(Q_S)\), all vertices have distance at most \(|S|\) from \(\mathbf{0}\), and distance at most \(|S|+k\) from any vertex of \(Q_{S\cap\{1\}\times B}\). For \(i,j\in [6]\), \(a,b\in \mathbb{F}_2\), and \(T\subseteq \left([6]\times F\setminus\{0\}\right)\), write
\[\BT(Q_S,T)=\bigcup\left\{\BT(Q_S)+\mathbf{u}:{\mathbf{u}\in Q_{T\setminus S}}\right\},\]
where \(\BT(Q_S)+\mathbf{u}\) means the graph obtained from \(\BT(Q_S)\) by translating it by \(\mathbf{u}\). Finally, write
\[\BT(Q_S,T,\Sigma_r=a)=\bigcup\left\{\BT(Q_S)+\mathbf{u}:\mathbf{u}\in Q_{T\setminus S}, \sum_{\alpha\in F}u_{r,\alpha}=a\right\},\]
\[\BT(Q_S,T,\Sigma_r=a,\Sigma_s=b)=\bigcup\left\{\BT(Q_S)+\mathbf{u}:\mathbf{u}\in Q_{T\setminus S}, \sum_{\alpha\in F}u_{r,\alpha}=a, \sum_{\alpha\in F}u_{s,\alpha}=b\right\}.\]

We now construct the junction rows using Claim \ref{clm:builder}, building \(Q_m\times \{\mathbf{w}_{\alpha,i}\}\) so that \(T_\alpha\left[Q_m\times\{\mathbf{w}_{\alpha,i}\}\right]\) includes \(G=G_i\) as follows:
\begin{align*}
G_1&=\BT\left(Q_{[2]\times F\setminus \{0\}}\right),\\
G_2&=\BT\left(Q_{\{1\}\times B \cup \{3\}\times F\setminus \{0\}},[2]\times F\setminus \{0\}\right),\\
G_3&=\BT\left(Q_{\{1\}\times B \cup \{4\}\times F\setminus \{0\}},[3]\times F\setminus \{0\}\right),\\
G_4&=\BT\left(Q_{\{1\}\times B \cup \{5\}\times F\setminus \{0\}},[4]\times F\setminus \{0\},\Sigma_3=0\right),\\
G_5&=\BT\left(Q_{\{1\}\times B \cup \{5\}\times F\setminus \{0\}},[4]\times F\setminus \{0\},\Sigma_3=1\right),\\
G_6&=\BT\left(Q_{\{1\}\times B \cup \{6\}\times F\setminus \{0\}},[5]\times F\setminus \{0\},\Sigma_3=0,\Sigma_4=0\right),\\
G_7&=\BT\left(Q_{\{1\}\times B \cup \{6\}\times F\setminus \{0\}},[5]\times F\setminus \{0\},\Sigma_3=0,\Sigma_4=1\right),\\
G_8&=\BT\left(Q_{\{1\}\times B \cup \{6\}\times F\setminus \{0\}},[5]\times F\setminus \{0\},\Sigma_3=1,\Sigma_4=0\right),\text{ and}\\
G_9&=\BT\left(Q_{\{1\}\times B \cup \{6\}\times F\setminus \{0\}},[5]\times F\setminus \{0\},\Sigma_3=1,\Sigma_4=1\right).\\
\end{align*}

Notice that these all meet the conditions of Claim \ref{clm:builder}, and that the use of \(B\) ensures that \(T_\alpha\) is a tree. Now each vertex \(\mathbf{u}\) of \(Q_m\) is connected in one of \(G_6,\dots,G_9\) to exactly one vertex \(\mathbf{u}'\) of \(Q_{[5]\times F\setminus \{0\}}\), namely the vertex which has \(u_{(i,\alpha)}=u'_{(i,\alpha)}\) for each \((i,\alpha)\in\left([5]\times F\setminus \{0\}\right)\setminus\left(\{1\}\times B\right)\). In particular, the distance between these two vertices in this \(G_i\) is at most \((2^k-1)+2k\).

By similar logic, we find that actually these \(G_i\) are sufficient to connect all the columns of label \(\alpha\) efficiently.

\begin{claim}
    In \(T_\alpha\), every vertex of \(Q_n\) has distance at most \(n+9k+11\) from \((\mathbf{0};\mathbf{w}_{\alpha,1})\).\label{clm:boring}
\end{claim}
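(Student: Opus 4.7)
The plan is to exhibit, for any vertex $(\mathbf{u};\mathbf{w})\in Q_n$, an explicit path in $T_\alpha$ to $(\mathbf{0};\mathbf{w}_{\alpha,1})$ of length at most $n+9k+11$. It has three parts: (i) take at most one edge to reach a label-$\alpha$ column, (ii) travel down that column's copy of $T'$ straight to a carefully chosen junction row, and (iii) thread through the junction rows $\mathbf{w}_{\alpha,i_1},\mathbf{w}_{\alpha,i_2},\mathbf{w}_{\alpha,3},\mathbf{w}_{\alpha,2},\mathbf{w}_{\alpha,1}$ in order, each time traversing the relevant $G_i$ and then making a length-two column hop to the next junction row.

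Concretely, if $\ell(\mathbf{u})\neq\alpha$ take a single row edge to a label-$\alpha$ neighbour $(\mathbf{u}^{(0)};\mathbf{w})$, otherwise set $\mathbf{u}^{(0)}=\mathbf{u}$. Travel in the column $\{\mathbf{u}^{(0)}\}\times Q_{n-m}$ (a copy of $T'$ in $T_\alpha$) from $\mathbf{w}$ to $\mathbf{w}_{\alpha,i_1}$, where $i_1\in\{6,7,8,9\}$ is determined by the parities $\Sigma_3(\mathbf{u}^{(0)}),\Sigma_4(\mathbf{u}^{(0)})$ so that $\mathbf{u}^{(0)}\in G_{i_1}$; since $\mathbf{w}_{\alpha,i_1}$ is adjacent to $(\mathbf{0},\alpha,1)$ in $T'$ and every vertex of $T'$ lies within $n-m+k+1$ of $(\mathbf{0},\alpha,1)$, this costs at most $n-m+k+2$. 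In the $r$-th junction row $\mathbf{w}_{\alpha,i_r}$ I use the broadcast-tree structure of $G_{i_r}$ to pass from the current vertex to a label-$\alpha$ vertex $\mathbf{u}^{(r)}$ lying in the intended shrunken subhypercube: $Q_{[5]\times F\setminus\{0\}}$, then $Q_{[4]\times F\setminus\{0\}}$ (with $i_2\in\{4,5\}$ chosen by $\Sigma_3(\mathbf{u}^{(1)})$), then $Q_{[3]\times F\setminus\{0\}}$, then $Q_{[2]\times F\setminus\{0\}}$, and finally $\mathbf{0}$ via $G_1$; between consecutive rows I take the length-two hop $\mathbf{w}_{\alpha,i_r}\to(\mathbf{0},\alpha,1)\to\mathbf{w}_{\alpha,i_{r+1}}$ inside the column $\{\mathbf{u}^{(r)}\}\times Q_{n-m}$, which is available because $\ell(\mathbf{u}^{(r)})=\alpha$.

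The central bound is that each of the four intermediate row-traversals has length at most $(2^k-1)+2k$. Each relevant $G_i$ decomposes as a union of translated broadcast trees $\BT(Q_S)+\mathbf{u}^\star$ with $|S|=k+2^k-1$; the incoming vertex lies within $|S|$ of its root $\mathbf{u}^\star$, and the unique label-$\alpha$ vertex of this component that sits in the shrunken subhypercube is the root shifted along $\{1\}\times B$ by a weight-$\leq k$ vector (using that $B$ is a basis of $F$), so the concatenated path through the root has length at most $|S|+k=(2^k-1)+2k$. The final $G_1$-traversal costs at most $2(2^k-1)$. Summing everything,
\[1+(n-m+k+2)+4\bigl[(2^k-1)+2k+2\bigr]+2(2^k-1)=n+9k+11,\]
using $m=6(2^k-1)$ to cancel. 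The main technical point is the invariant that each $\mathbf{u}^{(r)}$ really is a label-$\alpha$ vertex in the intended shrunken subhypercube and is reachable within the $(2^k-1)+2k$ bound; this hinges on the parity conditions $\Sigma_3,\Sigma_4$ baked into $G_4,\ldots,G_9$ (so that the appropriate $G_i$ contains our vertex at each step) and on the basis property of $B$ (so that the target is uniquely determined and close to the root).
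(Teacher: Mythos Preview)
Your proof is correct and follows essentially the same route as the paper: one row edge to a label-$\alpha$ column, a column traversal of cost at most $n-m+k+2$ to the appropriate junction row, then five junction-row traversals (through $G_{i_1}\in\{G_6,\ldots,G_9\}$, $G_{i_2}\in\{G_4,G_5\}$, $G_3$, $G_2$, $G_1$) interleaved with four length-two column hops, with the same bounds $(2^k-1)+2k$ and $2(2^k-1)$ and the same arithmetic. Your explanation of the $(2^k-1)+2k$ bound via the root of the translated broadcast tree and the basis property of $B$ is in fact slightly more explicit than the paper's.
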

\begin{proof}
    We describe a walk in \(T_\alpha\) from an arbitrary vertex \((\mathbf{u};\mathbf{w})\) to \((\mathbf{0};\mathbf{w}_{\alpha,1})\), in terms of \(\mathbf{u}_i\), which will all have label \(\alpha\).
    \begin{itemize}
        \item There is some \(\mathbf{u}_6\in Q_m\) of label \(\alpha\), which is either adjacent or equal to \(\mathbf{u}\). So the distance between \((\mathbf{u};\mathbf{w})\) and \((\mathbf{u}_6;\mathbf{w})\) is at most one.
        \item The column \(\mathbf{u}_6\times Q_{n-m}\) is a copy of \(T'\), and so \((\mathbf{u}_6;\mathbf{w})\) has distance at most \(n-m+k+2\) from each \((\mathbf{u}_6;\mathbf{w}_{\alpha,i})\).
        \item There is a path from \(\mathbf{u}_6\) to some \(\mathbf{u}_5\in Q_{[5]\times F\setminus \{0\}}\) in some \(G_i\) with \(i\in\{6,7,8,9\}\). This has length at most \((2^k-1)+2k\), and gives a path of that length in \(T_\alpha\) from \((\mathbf{u}_6;\mathbf{w}_{\alpha,i})\) to \((\mathbf{u}_5;\mathbf{w}_{\alpha,i})\).
        \item There is a path of length two from \((\mathbf{u}_5;\mathbf{w}_{\alpha,i})\) to any other \((\mathbf{u}_5;\mathbf{w}_{\alpha,j})\).
        \item There is a path from \(\mathbf{u}_5\) to some \(\mathbf{u}_4\in Q_{[4]\times F\setminus \{0\}}\) in some \(G_j\) with \(j\in\{4,5\}\). This has length at most \((2^k-1)+2k\), and gives a path of that length in \(T_\alpha\) from \((\mathbf{u}_5;\mathbf{w}_{\alpha,j})\) to \((\mathbf{u}_4;\mathbf{w}_{\alpha,j})\).
        \item There is a path of length two from \((\mathbf{u}_4;\mathbf{w}_{\alpha,j})\) to \((\mathbf{u}_4;\mathbf{w}_{\alpha,3})\).
        \item There is a path from \(\mathbf{u}_4\) to some \(\mathbf{u}_3\in Q_{[3]\times F\setminus \{0\}}\) in \(G_3\). This has length at most \((2^k-1)+2k\), and gives a path of that length in \(T_\alpha\) from \((\mathbf{u}_4;\mathbf{w}_{\alpha,3})\) to \((\mathbf{u}_3;\mathbf{w}_{\alpha,3})\).
        \item There is a path of length two from \((\mathbf{u}_3;\mathbf{w}_{\alpha,3})\) to \((\mathbf{u}_3;\mathbf{w}_{\alpha,2})\).
        \item There is a path from \(\mathbf{u}_3\) to some \(\mathbf{u}_2\in Q_{[2]\times F\setminus \{0\}}\) in \(G_2\). This has length at most \((2^k-1)+2k\), and gives a path of that length in \(T_\alpha\) from \((\mathbf{u}_3;\mathbf{w}_{\alpha,2})\) to \((\mathbf{u}_2;\mathbf{w}_{\alpha,2})\).
        \item There is a path of length two from \((\mathbf{u}_2;\mathbf{w}_{\alpha,2})\) to \((\mathbf{u}_2;\mathbf{w}_{\alpha,1})\).
        \item There is a path from \(\mathbf{u}_2\) to \(\mathbf{0}\) in \(G_1\). This has length at most \(2(2^k-1)\), and gives a path of that length in \(T_\alpha\) from \((\mathbf{u}_2;\mathbf{w}_{\alpha,1})\) to \((\mathbf{0};\mathbf{w}_{\alpha,1})\).
    \end{itemize}
    If, for example, some \(\mathbf{u}_i\) are not distinct, this may be a walk rather than a path, but it still gives an upper bound on the distance between \((\mathbf{u};\mathbf{w})\) and \((\mathbf{0};\mathbf{w}_{\alpha,1})\). But this walk has total length at most
    \[1+(n-m+k+2)+4((2^k-1)+2k)+4\cdot 2+2(2^k-1)=n+9k+11,\]
    as desired.
\end{proof}

But this implies that each tree \(T_\alpha\) has diameter at most \(2(n+9k+11)=\left(2+o(1)\right)n\). This concludes the proof of \ref{thm:secondtheorem}.

\section{Generalising to Cartesian powers}
The \emph{Cartesian product} of two graphs \(G_1,G_2\), written \(G_1\times G_2\), has vertex set \(V(G_1)\times V(G_2)\), and \((u,w)\) and \((u',w')\) adjacent if either \(u=u'\) and \(w\sim w'\), or \(u\sim u'\) and \(w=w'\). More generally, the Cartesian product \(G_1\times\cdots\times G_n\) has vertex set \(V(G_1)\times\dots\times V(G_n)\), with \(\mathbf{u}\sim\mathbf{u}'\) if and only if there is some \(i\in [n]\) s.t. \(u_i\sim u'_i\), and \(u_j=u'_j\) for all \(j\neq i\).

We will consider the \emph{Cartesian power} \(H^{n}\), defined as the Cartesian product of \(n\) copies of \(H\). Note that this is a natural generalisation of the hypercube. Indeed, we may write \(Q_n=K_2^{n}\), but also \(Q_{2n}=C_4^{n}\). We will need \(H\) to be connected for \(H^{n}\) to host spanning trees at all.

Take \(T^*\) to be a spanning tree of \(H\) with minimal diameter. Let \(z\in H\) be the centre of \(T^*\). We define a spanning tree of \(H^{n}\), the \emph{generalised broadcast tree} \(\BT(H^{n})\), by
\[E\left(\BT(H^{n})\right)=\left\{(u_1,\dots,u_{i-1},u_i,z,\dots,z)(u_1,\dots,u_{i-1},u'_i,z,\dots,z):i\in n, u_j\in H, u_iu'_i\in T^*\right\}.\]

We demonstrate that this is the best diameter we could hope to obtain. Where \(d_G(u,v)\) is the distance between \(u\) and \(v\) in the graph \(G\), we say a graph \(G\) has \emph{radius}
\[r(G)=\min_{u\in G}\left(\max_{v\in G}d_G(u,v)\right).\]
\begin{claim}
    The minimal diameter of a spanning tree of \(H^{n}\) is either \(2n\cdot r(H)-1\) or \(2n\cdot r(H)\).
\end{claim}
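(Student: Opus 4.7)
The plan is to sandwich the minimum diameter between \(2n\cdot r(H)-1\) and \(2n\cdot r(H)\) by proving matching lower and upper bounds, so that the (integer) minimum is forced into this two-element set. For the lower bound, I will exploit the fact that distances in a Cartesian product decompose coordinatewise: \(d_{H^n}(\mathbf{u},\mathbf{v})=\sum_i d_H(u_i,v_i)\). This immediately gives \(r(H^n)=n\cdot r(H)\), with a center located at \((z,\ldots,z)\) for any center \(z\) of \(H\). Since distances in any subgraph are at least distances in the ambient graph, any spanning tree \(T\) of \(H^n\) satisfies \(r(T)\ge r(H^n)=n\cdot r(H)\), and for any tree one has \(\mathrm{diam}(T)\ge 2r(T)-1\). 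Hence \(\mathrm{diam}(T)\ge 2n\cdot r(H)-1\).

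For the upper bound, I will show \(\BT(H^n)\) has diameter at most \(2n\cdot r(H)\). The key auxiliary fact is that \(z\), the centre of \(T^*\), has eccentricity exactly \(r(H)\) within \(T^*\). To see this, I take any BFS tree rooted at a centre of \(H\), which has diameter at most \(2r(H)\); since \(T^*\) is chosen to minimise diameter, \(\mathrm{diam}(T^*)\le 2r(H)\). On the other hand, \(r(T^*)\ge r(H)\) (again because distances in \(T^*\) dominate distances in \(H\)), and for any tree \(r(T^*)=\lceil\mathrm{diam}(T^*)/2\rceil\). Combining these forces \(r(T^*)=r(H)\). By construction, the distance in \(\BT(H^n)\) from any vertex \((u_1,\ldots,u_n)\) to \((z,\ldots,z)\) equals \(\sum_i d_{T^*}(u_i,z)\le n\cdot r(H)\), so the diameter is at most \(2n\cdot r(H)\).

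Combining the two bounds, the minimum spanning tree diameter lies in \(\{2n\cdot r(H)-1,\,2n\cdot r(H)\}\), proving the claim. The only non-mechanical step is the verification that \(r(T^*)=r(H)\); this is where minimality of the diameter of \(T^*\) (and not some other feature) is genuinely used, and everything else reduces to standard inequalities between radius and diameter of trees together with the product-distance formula.
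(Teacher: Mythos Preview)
Your proof is correct and follows essentially the same approach as the paper: the lower bound via the eccentricity of a centre of an arbitrary spanning tree, and the upper bound via \(\BT(H^n)\). You are in fact more careful than the paper in one place, explicitly justifying that \(r(T^*)=r(H)\) (via a BFS tree from a centre of \(H\) and the radius--diameter relation for trees), whereas the paper simply uses \(d_{T^*}(u_i,z)\le r(H)\) without comment.
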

\begin{proof}
    First we show the minimal diameter of a spanning tree of \(H^{n}\) is at least \(2n\cdot r(H)-1\). Suppose \(T\) is a spanning tree of \(H^{n}\), and some vertex \(\mathbf{u}\) is a centre of \(T\). Then by the definition of \(r(H)\), for each \(i=1,\dots,n\) there is some \(v_i\in H\) with \(d_H(u_i,v_i)\geq r(H)\). Thus
    \[d_T(\mathbf{u},\mathbf{v})\geq d_{H^{n}}(\mathbf{u},\mathbf{v}) \geq n\cdot r(H).\]
    Hence the diameter of \(T\) is at least \(2n\cdot r(H) -1\).

    We now observe that \(\BT(H^{n})\) has diameter at most \(2n\cdot r(H)\).
    Consider the path from \(\mathbf{u}\) to \((\mathbf{z}=(z,\dots,z)\), which passes through \((u_1,\dots,u_{n-1},z),\dots,(u_1,z,\dots,z)\). Hence each \(\mathbf{u}\) has
    \[d_{\BT(H^{n})}\left(\mathbf{u},\mathbf{z}\right)=\sum_{i=1}^n d_{T^*}(u_i,z)\leq n\cdot r(H).\]
    Hence the tree has radius at most \(n\cdot r(H)\), and so has the desired diameter.
\end{proof}

We will prove the following result:

\begin{theorem}
    For \(k\) a positive integer, and \(n\geq 6\cdot 2^k + k + 4\), there exist \(2^k\) completely independent spanning trees in \(H^{n}\) of diameter at most \(2(n+5k+1)r(H)+4k+10=\left(2+o(1)\right)n\cdot r(H)\).\label{thm:thirdtheorem}
\end{theorem}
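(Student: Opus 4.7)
The plan is to lift the construction of Theorem~\ref{thm:secondtheorem} to $H^n$, using a 2-colouring of a minimum-diameter spanning tree of $H$ in place of the binary structure of $Q$. Let $T^*$ be a spanning tree of $H$ of minimum diameter, centred at $z$, and fix a $T^*$-neighbour $z'$ of $z$. Since $T^*$ is a tree, and so bipartite, one can choose a 2-colouring $b : H \to \mathbb{F}_2$ with $b(z) = 0$, in which every $h \in H$ has at least one $T^*$-neighbour of opposite $b$-value. Viewing $H^n = H^m \times H^{n-m}$ with $m = 6(2^k - 1)$ and identifying the coordinates of $H^m$ with $[6] \times F \setminus \{0\}$, I would define $\ell : H^m \to F$ by $\ell(u) = \sum_{(i, \beta)} b(u_{(i, \beta)}) \beta$. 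This labelling has the same key property as in Section~5: every $u \in H^m$ has, for each $\beta \in F \setminus \{0\}$, six row-neighbours of label $\ell(u) + \beta$, each obtained by a single $T^*$-step in one of the six coordinates $(i, \beta)$.

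Next I would build $T_\alpha$ mirroring Section~5. For the column tree $T'$, I would take the generalised broadcast tree of $H^{n-m}$ centred at $\mathbf{z} = (z, \ldots, z)$ and reroute it so that nine $T^*$-neighbours of a junction vertex $v_\alpha$ become leaves. Placing $v_\alpha$ at distance at most $k + 1$ from $\mathbf{z}$ by $T^*$-steps in $|\alpha| + 1$ specific coordinates ensures every vertex of $H^{n-m}$ lies at $T'$-distance at most $(n-m) r(H) + k + 2$ from each designated leaf. In $T_\alpha$, each column $\{u\} \times H^{n-m}$ with $\ell(u) = \alpha$ receives a copy of $T'$, and other columns receive no edges. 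The standard and junction rows copy Section~5 with each coordinate flip replaced by a single $T^*$-step, so that the sub-broadcast-tree structures $G_1, \ldots, G_9$ live inside $H^m$ using $T^*$-edges. The generalisation of Claim~\ref{clm:builder} then gives that the $T_\alpha$ are completely independent spanning trees of $H^n$.

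For the diameter I would mimic Claim~\ref{clm:boring}. The column step from $(u_6; w)$ to $(u_6; w_{\alpha, i})$ contributes at most $(n-m) r(H) + k + 2$. Each $G_i$-path can be routed to stay inside the sub-cube $Q_m = \{z, z'\}^m \subset H^m$ by choosing the intermediate vertices $u_5, u_4, u_3, u_2, \mathbf{0}$ from $Q_m$, so each of its $|S_i|$ edges is the single $H$-edge $zz'$; the row switches between designated leaves remain length~$2$. Summing these contributions in the style of Claim~\ref{clm:boring} and substituting $m = 6(2^k - 1)$ should then recover the radius bound $(n + 5k + 1) r(H) + 2k + 5$. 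The main obstacle is precisely this diameter bookkeeping: a naive routing that allowed the intermediate junction vertices to lie anywhere in $H^m$ would force each $T^*$-step in a coordinate to cost up to $r(H)$ $H$-edges, turning the Section~5 junction contribution of $O(2^k)$ into $O(2^k \cdot r(H))$ and blowing the diameter far beyond the $(2 + o(1)) n r(H)$ target. The careful choice of the first label-$\alpha$ row-neighbour $u_6$, together with the 2-colouring labelling that guarantees such a choice exists, is what lets the first junction step be the only one with a ``$H^m$-wide'' contribution, leaving the rest confined to the cheap sub-cube $Q_m$.
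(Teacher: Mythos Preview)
Your framework---lifting the Section~5 construction via a 2-colouring of a minimum-diameter spanning tree $T^*$ of $H$, defining $\ell$ as the corresponding weighted sum, building $T'$ in the columns with designated leaf-clusters $W_\alpha$, and generalising Claim~\ref{clm:builder}---matches the paper exactly. The gap is in your diameter argument.

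You claim that after the first junction step the intermediate vertices $u_5,u_4,u_3,u_2$ can be chosen inside the sub-cube $\{z,z'\}^m$, so that every subsequent junction edge is a single $zz'$-step. This is not possible. Each $G_i$ is a union of translates of a broadcast tree on $H^{S_i}$ for some block $S_i\subset[6]\times F\setminus\{0\}$ of size at most $2^k-1+k$; along a $G_i$-path only the coordinates in $S_i$ move, while every coordinate outside $S_i$ is frozen at its value in $u_6$. Since $u_6$ differs from an arbitrary $u\in H^m$ in just one coordinate, these frozen coordinates are arbitrary elements of $H$, not elements of $\{z,z'\}$. Hence $u_5$ lies in $H^{[5]\times F\setminus\{0\}}$ but generally not in $\{z,z'\}^m$, and likewise for $u_4,u_3,u_2$. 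Reaching $\{z,z'\}^m$ from a generic $u_6$ in a single $G_i$ would require $G_i$ to have edges in all six direction-blocks, violating condition~2 of the builder claim.

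More importantly, your worry about the ``naive'' routing is misplaced. Letting each coordinate change in a junction row cost up to $r(H)$ does \emph{not} overshoot the target: across all nine junction rows the total number of row-coordinate changes is at most about $m+O(k)=6(2^k-1)+O(k)$, so the junction contribution is at most $(m+O(k))\,r(H)$, and together with the column contribution of $(n-m+O(k))\,r(H)$ this already gives radius $(n+O(k))\,r(H)+O(k)$, which is the stated bound. This is exactly what the paper does. The genuine new difficulty in $H^m$---which you glance at but misdiagnose---is not diameter but \emph{acyclicity}: since the 2-colouring is not injective, there are many vertices of $H^m$ with the same label and the same coordinates outside $\{1\}\times B$, so the $G_i$ copied verbatim from Section~5 would create cycles in $T_\alpha$. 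The paper's fix is to restrict only the $\{1\}\times B$ coordinates to $\{z,z'\}$ in $G_1,\dots,G_5$ (its $\BT^*$ construction), which restores uniqueness of the label-$\alpha$ representative and keeps $T_\alpha$ a tree, while leaving the diameter accounting essentially unchanged.
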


Let \(c:V(T^*)\to \mathbb{F}_2\) be the \(2\)-colouring of \(T^*\) with \(c(z)=0\). We extend the construction of Section 5 to \(H^{n}\). We will again regard \(H^{n}\) as \(H^{m}\times H^{(n-m)}\), and consider rows of the form \(H^{m}\times \{\mathbf{w}\}\) and columns of the form \(\{\mathbf{u}\}\times H^{(n-m)}\). We will still require \(n-m\geq k+10\).

We now say an edge \(\mathbf{u}\mathbf{u}'\in H^{n}\) has \emph{direction} \(i\) if \(u_j=u'_j\) for each \(j\neq i\), and \(u_i\sim_H u'_i\). We can no longer refer to \(\mathbf{e}_i\), so take \(n(\mathbf{u},i)\) to be a neighbour of \(\mathbf{u}\) by an edge in direction \(i\).

We again take \(m=6(2^k-1)\), and identify \([m]\) with \([6]\times F\setminus \{0\}\). We label the vertices of \(H^{m}\) by
\[\ell(\mathbf{u})=\sum_{(i,\alpha)\in[6]\times F\setminus\{0\}}c(u_{(i,\alpha)})\alpha.\]

\subsection{The columns}

We define \(T'\) for \(H^{n}\) analogously to Subsection \ref{subsec:lindiam-columns}. Let \(y\) be a leaf of \(T^*\), with neighbour \(x\). Set
\[T''=\BT(H^{(n-m)})\left[\left\{\mathbf{u}\in H^{(n-m)}:u_{n-m}\neq y\right\}\right].\] Choose distinct vertices \(\mathbf{w}_\alpha\in \mathbf{z}\times H^{k}\times \{y\}\) for each \(\alpha\) in \(F\). Now set \(W_\alpha\) to be the neighbours of \(\mathbf{w}_\alpha\) in directions from \([n-m-k-1]\times F\setminus \{0\}\). Notice that there are at least nine of these, as we have taken \(n-m\geq k+10\). Hence we can choose distinct \(\mathbf{w}_{\alpha,i}\in W_\alpha\) for \(i\in [9]\).

We take
\[T'=T''\cup \left\{\mathbf{w}_\alpha\mathbf{w}':\mathbf{w'}\in W_\alpha\right\}\cup \left\{(\mathbf{w}^-,x),(\mathbf{w}^-,y):\mathbf{w}^-\in H^{{(n-m-1)}}\setminus\cup_{\alpha\in F}W_\alpha\right\}.\]
Once again, each \(\mathbf{w}_\alpha\) is adjacent to at least nine leaves \(\mathbf{w}_{\alpha,i}\), and each vertex of \(H^{(n-m)}\) has distance at most \((n-m+k+1)r(H)\) from each \(\mathbf{w}_\alpha\).

\subsection{The rows}

We now generalise Claim \ref{clm:builder} to general Cartesian powers.

\begin{claim}
    Suppose \(G\subset H^{m}\) meets the following conditions:
    \begin{enumerate}
        \item \(G\) is a forest.
        \item For some \(d\), all the edges of \(G\) have a direction in \(\{1,d\}\times F\setminus \{0\}\).
        \item Every vertex with positive degree in \(G\) is connected to a vertex of label \(\alpha\).
        \item There exist \(r,s\in [6]\setminus\{1,b\}\) and \(a,b\in \mathbb{F}_2\) such that for any vertex \(\mathbf{u}\) of positive degree in \(G\),
        \[\sum_{\alpha\in F\setminus\{0\}}c(u_{r,\alpha})=a,\]
        and
        \[\sum_{\alpha\in F\setminus\{0\}}c(u_{s,\alpha})=b.\]
    \end{enumerate}
    Then at a leaf \(\mathbf{w}\) of \(T'\), we can construct a row \(H^{m}\times\{\mathbf{w}\}\) such that 
    \begin{enumerate}
        \item \(G\subset T_\alpha\left[H^{m}\times\{\mathbf{w}\}\right]\),
        \item For each \(\beta\), the induced subgraph \(T_\beta\left[H^{m}\times\{\mathbf{w}\}\right]\) is a forest in which every vertex of \(\mathbf{u}\) is connected to some vertex of label \(\beta\),
        \item the forests \(T_\beta\left[H^{m}\times\{\mathbf{w}\}\right]\) are edge-disjoint, and
        \item each vertex \(\mathbf{u}\) is in the interior of at most one resulting \(T_\beta\) on the entire \(H^{n}\). 
    \end{enumerate}\label{clm:generalbuilder}
\end{claim}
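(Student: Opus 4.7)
The plan is to imitate the proof of Claim \ref{clm:builder} essentially verbatim, with the single change that each hypercube translation $\mathbf{u} \mapsto \mathbf{u} + \mathbf{e}_{(i,\gamma)}$ is replaced by an $H^{m}$-move that still shifts the label by $\gamma$. For each $u \in V(H)$ I fix an arbitrary $T^*$-neighbour $t(u)$; since $c$ properly $2$-colours $T^*$, $c(t(u)) \neq c(u)$. For $\mathbf{u} \in H^{m}$ and a direction $(i,\gamma) \in [6] \times F \setminus \{0\}$, let $\tau(\mathbf{u},(i,\gamma))$ be the $H^{m}$-neighbour of $\mathbf{u}$ obtained by replacing its $(i,\gamma)$-coordinate with $t(u_{(i,\gamma)})$; traversing the edge $\{\mathbf{u}, \tau(\mathbf{u},(i,\gamma))\}$ toggles $c(u_{(i,\gamma)})$ and hence changes $\ell$ by exactly $\gamma$, just as in the hypercube.

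Without loss of generality assume $d = 2$, $r = 5$, $s = 6$, and write $\Sigma_i(\mathbf{u}) = \sum_{\gamma \in F \setminus \{0\}} c(u_{(i,\gamma)})$. I let $V_0$ be the set of degree-zero vertices of $G$ with label not $\alpha$, and set
\[
E\bigl(T_\alpha[H^{m} \times \{\mathbf{w}\}]\bigr) = E(G) \cup \bigl\{\{\mathbf{u}, \tau(\mathbf{u},(1,\alpha+\ell(\mathbf{u})))\} : \mathbf{u} \in V_0\bigr\}.
\]
Each new edge joins an isolated off-$\alpha$-label vertex of $G$ to a fresh vertex of label $\alpha$, so $T_\alpha[H^{m} \times \{\mathbf{w}\}]$ is a forest whose every vertex lies in a component containing a label-$\alpha$ vertex.

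For each $\beta \neq \alpha$ and each $\mathbf{u}$ with $\ell(\mathbf{u}) \neq \beta$, I define $n_\beta(\mathbf{u}) = \tau(\mathbf{u},(j,\beta+\ell(\mathbf{u})))$ by the same four-way case analysis as in Claim \ref{clm:builder}: $j = 3$ if $\ell(\mathbf{u}) < \beta$ and $\Sigma_5(\mathbf{u}) \neq a$; $j = 5$ if $\ell(\mathbf{u}) < \beta$ and $\Sigma_5(\mathbf{u}) = a$; and symmetrically $j \in \{4,6\}$ if $\beta < \ell(\mathbf{u})$, discriminated by $\Sigma_6(\mathbf{u})$ versus $b$. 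I take $E(T_\beta[H^{m} \times \{\mathbf{w}\}]) = \{\mathbf{u}\,n_\beta(\mathbf{u}) : \ell(\mathbf{u}) \neq \beta\}$. The crucial verification is that in each of the four cases $n_\beta(\mathbf{u})$ has $\Sigma_5 \neq a$ or $\Sigma_6 \neq b$: a $\tau$-move in direction $(i,\cdot)$ toggles $\Sigma_i$ for $i \in \{5,6\}$ and leaves both $\Sigma_5,\Sigma_6$ unchanged for $i \in \{3,4\}$. Hence $n_\beta(\mathbf{u})$ violates hypothesis~(4), so has degree zero in $G$.

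The remaining verifications transfer directly. Each $T_\beta[H^{m} \times \{\mathbf{w}\}]$ is a forest, because every vertex of label $\neq \beta$ has degree exactly one. Edge-disjointness across $\beta$ follows from the sorted pairing of label pairs to direction indices in $\{1,\ldots,6\}$, exactly as in Claim \ref{claim:obvious}. For the global interior condition, each vertex $\mathbf{u}$ of label $\gamma \neq \beta$ contributes one row-$T_\beta$-edge and zero column-$T_\beta$-edges (columns carry only edges of $T_\gamma$), so it is a leaf of $T_\beta$; and each label-$\beta$ vertex with positive $G$-degree has zero row-$T_\beta$-edges (it is never selected as an $n_\beta$-target) and one column-$T_\beta$-edge, so it too is a leaf of $T_\beta$. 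Consequently each vertex is in the interior of at most one $T_\beta$. The only genuine subtlety relative to Claim \ref{clm:builder} is that in $H^{m}$ a single direction admits many edges, but fixing any $T^*$-neighbour at each coordinate recovers the hypercube's clean label arithmetic, after which the entire argument carries over.
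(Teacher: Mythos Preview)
Your proof is correct and follows essentially the same route as the paper: after the same WLOG reduction you make the identical construction for $T_\alpha$ and the identical four-case definition of $n_\beta(\mathbf{u})$, with your map $\tau(\mathbf{u},(i,\gamma))$ playing exactly the role of the paper's chosen neighbour $n(\mathbf{u},(i,\gamma))$. The verifications you give (parity forcing $n_\beta(\mathbf{u})$ to have $G$-degree zero, forest and edge-disjointness via the ordering on $F$, and the leaf check for the interior condition) match the paper's reasoning step for step.
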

\begin{proof}
    Again, without loss of generality, assume that \(d=2,r=5\) and \(s=6\). Let \(V_0\) be the set of vertices of degree zero in \(G\) with labels other than \(\alpha\). Now set
    \[E\left(T_\alpha\left[H^{m}\times\{\mathbf{w}\}\right]\right)=E(G)\cup\left\{\left\{\mathbf{u},n(\mathbf{u},(1,\alpha+\ell(\mathbf{u})))\right\}:\mathbf{u}\in V_0\right\}.\]
    As before, none of the edges we add form a cycle, as we only add one edge to each isolated vertex of label other than \(\alpha\). So this is indeed a forest in which every vertex of \(\mathbf{u}\) is connected to some vertex of label \(\alpha\).
    
    We now construct the remaining \(T_\beta\left[H^{m}\times\{\mathbf{w}\}\right]\), in a similar way to before. For each \(\mathbf{u}\) with label other than \(\beta\), we will choose its one neighbour to be
    \begin{align*}
        n_{\beta}(\mathbf{u})={}&{}n\left(\mathbf{u},{(3,\beta+\ell(\mathbf{u}))}\right){}&{}\text{ if }\ell(\mathbf{u})<\beta\text{ and }\sum_{\alpha\in F\setminus\{0\}}c(u_{(5,\alpha)})\neq a,\\
        {}&{}n\left(\mathbf{u},{(4,\beta+\ell(\mathbf{u}))}\right){}&{}\text{ if }\beta<\ell(\mathbf{u})\text{ and }\sum_{\alpha\in F\setminus\{0\}}c(u_{(6,\alpha)})\neq b,\\
        {}&{}n\left(\mathbf{u},{(5,\beta+\ell(\mathbf{u}))}\right){}&{}\text{ if }\ell(\mathbf{u})<\beta\text{ and }\sum_{\alpha\in F\setminus\{0\}}c(u_{(5,\alpha)})=a,\\
        {}&{}n\left(\mathbf{u},{(6,\beta+\ell(\mathbf{u}))}\right){}&{}\text{ if }\beta<\ell(\mathbf{u})\text{ and }\sum_{\alpha\in F\setminus\{0\}}c(u_{(6,\alpha)})=b.\\
    \end{align*}

    Now again, for each \(\mathbf{u}\), the vertex \(n_{\beta}(\mathbf{u})\) has degree zero in \(G\), as we have
    \[\left(\sum_{\alpha\in F\setminus\{0\}}c(u_{(5,\alpha)}),\sum_{\alpha\in F\setminus\{0\}}c(u_{(6,\alpha)})\right)\neq (a,b).\]
    Hence for each \(\beta\), we take
    \[E\left(T_\beta\left[H^{m}\times\{\mathbf{w}\}\right]\right)=\left\{\mathbf{u}n_{\beta}(\mathbf{u}):\mathbf{u}\in H^{m}, \ell(\mathbf{u})\neq \beta \right\}.\]
    Now every edge of \(T_\beta\left[H^{m}\times\{\mathbf{w}\}\right]\) has an endpoint of degree one, so this is a forest. As before, the ordering of \(F\) ensures that all the \(T_\beta\left[H^{m}\times\{\mathbf{w}\}\right]\) are edge-disjoint. Thus we have met all our desired conditions.
\end{proof}

We define \(H^{S}\) analogously to \(Q_S\), and likewise define \(\BT(H^{S},T,\Sigma_r=a,\Sigma_s=b)\) analogously to \(\BT(Q_S,T,\Sigma_r=a,\Sigma_s=b)\). We will then repeat the construction of Section 5. We will have to be more careful in one place: previously we linked our \(G_i\) together using the fact that for every \(\mathbf{u}\in Q_m\), there is exactly one vertex \(\mathbf{u}'\in Q_m\) with label \(\alpha\) such that \(u_{i,\beta}=u'_{i,\beta}\) for every index \((i,\beta)\notin \{1\}\times B\). This is not necessarily true in \(H^{m}\), as \(c\) is not injective.

Instead, we fix \(x\) as some a neighbour of \(z\) in \(T^*\). Now we will use the fact that \(\mathbf{u}\in H^{m}\), there is exactly one vertex \(\mathbf{u}'\in H^{m}\) with label \(\alpha\) such that \(u_{i,\beta}=u'_{i,\beta}\) for every index \((i,\beta)\notin \{1\}\times B\), and \(u'_{1,\beta}\in\{x,z\}\) for each \(\beta\in B\). This will be sufficient to join our trees.

Explicitly, for \(S\subset [6]\times F\setminus \{0\}\), we write
\[H^{S}=\left\{\mathbf{u}\in H^{m}:u_{(i,\alpha)}=z\ \forall (i,\alpha)\in\left([6]\times F\setminus\{0\}\right)\setminus S\right\}.\]
Take \(\BT\left(H^{S}\right)\) to be the broadcast tree on \(H^{S}\), where we identify this with \(H^{|S|}\) in the natural way. For \(\{1\}\times B\subset S\), we now take
\[\BT^*\left(H^{S}\right)=\BT\left(H^{S}\right)\cap\left\{\mathbf{u}\in H^{S}:u_{(1,\beta)}\in \{x,z\}\ \forall \beta \in B\right\}.\]
Notice this still has diameter at most \(2|S| \cdot r(H)\). Indeed, it has slightly smaller diameter, as its radius is now \((|S|-k)\cdot r(H) + 1\).

For \(T\subset [6]\times F\setminus \{0\}\), we will take \(\BT\left(H^{S},T\right)\) to be the union of the copies of \(\BT\left(H^{S}\right)\) arising by replacing coordinates in \(T\setminus S\) with any vertex of \(Q_{T\setminus S}\). We define \(\BT^*\left(H^{S},T\right)\) analogously, taking
\[\BT^*\left(H^{S},T\right)=\BT\left(H^{S},T\right)\cap\left\{\mathbf{u}\in H^{S}:u_{(1,\beta)}\in \{x,z\}\ \forall \beta \in B\right\}.\]
For \(S,T\subset ([6]\setminus \{r\})\times F\setminus \{0\}\), we set
\[\BT\left(H^{S},T,\Sigma_r=a\right)=\left\{\mathbf{u}\mathbf{u}'\in \BT\left(H^{S},T\right):\sum_{\alpha \in F}c(u_{r,\alpha})=a\right\}.\]
We define \(\BT^*\left(H^{S},T,\Sigma_r=a\right)\), \(\BT\left(H^{S},T,\Sigma_r=a,\Sigma_s=b\right)\), and 
\(\BT^*\left(H^{S},T,\Sigma_r=a,\Sigma_s=b\right)\) analogously.

We construct the junction rows in the same way as before, using Claim \ref{clm:generalbuilder}. We will build row \(Q_m\times \{\mathbf{w}_{\alpha,i}\}\) so that \(T_\alpha\left[Q_m\times\{\mathbf{w}_{\alpha,i}\}\right]\) includes \(G=G_i\) as follows:
\begin{align*}
G_1&=\BT^*\left(H^{[2]\times F\setminus \{0\}}\right),\\
G_2&=\BT^*\left(H^{\{1\}\times B \cup \{3\}\times F\setminus \{0\}},[2]\times F\setminus \{0\}\right),\\
G_3&=\BT^*\left(H^{\{1\}\times B \cup \{4\}\times F\setminus \{0\}},[3]\times F\setminus \{0\}\right),\\
G_4&=\BT^*\left(H^{\{1\}\times B \cup \{5\}\times F\setminus \{0\}},[4]\times F\setminus \{0\},\Sigma_3=0\right),\\
G_5&=\BT^*\left(H^{\{1\}\times B \cup \{5\}\times F\setminus \{0\}},[4]\times F\setminus \{0\},\Sigma_3=1\right),\\
G_6&=\BT\left(H^{\{1\}\times B \cup \{6\}\times F\setminus \{0\}},[5]\times F\setminus \{0\},\Sigma_3=0,\Sigma_4=0\right),\\
G_7&=\BT\left(H^{\{1\}\times B \cup \{6\}\times F\setminus \{0\}},[5]\times F\setminus \{0\},\Sigma_3=0,\Sigma_4=1\right),\\
G_8&=\BT\left(H^{\{1\}\times B \cup \{6\}\times F\setminus \{0\}},[5]\times F\setminus \{0\},\Sigma_3=1,\Sigma_4=0\right),\text{ and}\\
G_9&=\BT\left(H^{\{1\}\times B \cup \{6\}\times F\setminus \{0\}},[5]\times F\setminus \{0\},\Sigma_3=1,\Sigma_4=1\right).\\
\end{align*}

Our use of \(B\) with \(\BT^*\) ensures that \(T_\alpha\) is in fact a tree. Similar reasoning to Claim \ref{clm:boring} implies that in \(T_\alpha\), every vertex has distance from \((\mathbf{z},\mathbf{w}_{\alpha,1})\) at most
\[1+(n-m+k+1) r(H)+1 + 4\cdot\left((2^k+k-1)r(H)+k\right) +4\cdot 2+2\left(2^k-1\right)r(H).\]
But this is at most \((n + 5 k + 1)r(H) + 4k + 10\). Then each \(T_\alpha\) has diameter at most twice this, which concludes the proof of Theorem \ref{thm:thirdtheorem}.

\section{Further questions}

We have shown that there exist \(\left(\frac1{12}+o(1)\right)n\) completely independent spanning trees in \(Q_n\). Consideration of the number of edges in each spanning tree shows that we cannot find a set of more than \(\left(\frac12+o(1)\right)n\) completely independent spanning trees in \(Q_n\). This motivates the following question:

\begin{question}
    What is the greatest constant \(c\) such that, for each \(n\), there exist \((c+o(1))n\) completely independent spanning trees of \(Q_n\)?
\end{question}

Note that our result does not construct exactly \(\left(\frac1{12}+o(1)\right)n\) completely independent spanning trees of \(Q_n\), as for certain \(n\) it constructs \(\left(\frac1{6}+o(1)\right)n\). Writing \(f(n)\) for the maximum size of a set of completely independent spanning trees of \(Q(n)\), we may ask

\begin{question}
    As \(n\to\infty\), does \(\frac{f(n)}n\) tend to a limit?
\end{question}

We may also pose either of these questions with the additional constraint that the spanning trees have diameter \((2+o(1))n\).

\bibliographystyle{plain}
\bibliography{hypercube-cists}

\begin{thebibliography}{10}

\bibitem{araki_diracs_2014}
Toru Araki.
\newblock Dirac's {Condition} for {Completely} {Independent} {Spanning} {Trees}.
\newblock {\em Journal of Graph Theory}, 77(3):171--179, 2014.

\bibitem{cheng_independent_2023}
Baolei Cheng, Dajin Wang, and Jianxi Fan.
\newblock Independent {Spanning} {Trees} in {Networks}: {A} {Survey}.
\newblock {\em ACM Computing Surveys}, 55(14s):335:1--29, 2023.

\bibitem{hasunuma_completely_2001}
Toru Hasunuma.
\newblock Completely independent spanning trees in the underlying graph of a line digraph.
\newblock {\em Discrete Mathematics}, 234(1):149--157, 2001.

\bibitem{hasunuma_completely_2002}
Toru Hasunuma.
\newblock Completely {Independent} {Spanning} {Trees} in {Maximal} {Planar} {Graphs}.
\newblock In {\em Graph-{Theoretic} {Concepts} in {Computer} {Science}}, pages 235--245, 2002.

\bibitem{hong_completely_2018}
Xia Hong.
\newblock Completely independent spanning trees in \emph{k}-th power of graphs.
\newblock {\em Discussiones Mathematicae Graph Theory}, 38(3):801--810, 2018.

\bibitem{kandekar_constructing_2024}
S.~A. Kandekar and S.~A. Mane.
\newblock Constructing three completely independent spanning trees in hypercubes, 2024.
\newblock arXiv:2410.03379.

\bibitem{pai_constructing_2016}
Kung-Jui Pai and Jou-Ming Chang.
\newblock Constructing two completely independent spanning trees in hypercube-variant networks.
\newblock {\em Theoretical Computer Science}, 652:28--37, 2016.

\bibitem{pai_three_2020}
Kung-Jui Pai, Ruay-Shiung Chang, Ro-Yu Wu, and Jou-Ming Chang.
\newblock Three completely independent spanning trees of crossed cubes with application to secure-protection routing.
\newblock {\em Information Sciences}, 541:516--530, 2020.

\bibitem{pai_completely_2014}
Kung-Jui Pai, Jinn-Shyong Yang, Sing-Chen Yao, Shyue-Ming Tang, and Jou-ming Chang.
\newblock Completely {Independent} {Spanning} {Trees} on {Some} {Interconnection} {Networks}.
\newblock {\em IEICE Transactions on Information and Systems}, E97.D:2514--2517, 2014.

\bibitem{peterfalvi_two_2012}
Ferenc Péterfalvi.
\newblock Two counterexamples on completely independent spanning trees.
\newblock {\em Discrete Mathematics}, 312(4):808--810, 2012.

\bibitem{tang_optimal_2004}
Shyue-Ming Tang, Yue-Li Wang, and Yungho Leu.
\newblock Optimal {Independent} {Spanning} {Trees} on {Hypercubes}.
\newblock {\em Journal of Information Science and Engineering}, 20:143--156, 2004.

\end{thebibliography}

\end{document}